\renewcommand{\leq}{\leqslant}
\renewcommand{\geq}{\geqslant}
\newcommand*{\bigcorr@macro}[2]{\sbox{0}{\mbox{$#1($}}\dimen0=\ht0
                \advance\dimen0 by \dp0
                \multiply\dimen0 by #2 \divide\dimen0 by 100}
\newcommand*{\bigcorr@big}[2]{\mbox{$#1\left#2\bigcorr@macro{#1}{85}\vrule
                   height \dimen0 depth 0pt width 0pt\right.\n@space$}}
\newcommand*{\bigcorr@Big}[2]{\mbox{$#1\left#2\bigcorr@macro{#1}{115}\vrule
                   height \dimen0 depth 0pt width 0pt\right.\n@space$}}
\newcommand*{\bigcorr@bigg}[2]{\mbox{$#1\left#2\bigcorr@macro{#1}{145}\vrule
                   height \dimen0 depth 0pt width 0pt\right.\n@space$}}
\newcommand*{\bigcorr@Bigg}[2]{\mbox{$#1\left#2\bigcorr@macro{#1}{175}\vrule
                   height \dimen0 depth 0pt width 0pt\right.\n@space$}}
\DeclareRobustCommand*{\big}[1]{{\mathpalette\bigcorr@big{#1}}}
\DeclareRobustCommand*{\Big}[1]{{\mathpalette\bigcorr@Big{#1}}}
\DeclareRobustCommand*{\bigg}[1]{{\mathpalette\bigcorr@bigg{#1}}}
\DeclareRobustCommand*{\Bigg}[1]{{\mathpalette\bigcorr@Bigg{#1}}}
\DeclareRobustCommand*{\bi}[1]{{#1}}
\DeclareRobustCommand*{\bil}[1]{\mathopen{\bi{#1}}}
\DeclareRobustCommand*{\bir}[1]{\mathclose{\bi{#1}}}
\DeclareFontFamily{U}{escorr@fam}{}
\DeclareFontShape{U}{escorr@fam}{m}{n}{%
 <5>sfixed * [3.75] cmsy10%
 <6>sfixed * [4.5] cmsy10%
 <7>sfixed * [5.25] cmsy10%
 <8>sfixed * [6] cmsy10%
 <9>sfixed * [6.75] cmsy10%
 <10>sfixed * [7.5] cmsy10%
 <10.95>sfixed * [8.2125] cmsy10%
 <12>sfixed * [9] cmsy10%
 <14.4>sfixed * [10.8] cmsy10%
 <17.28>sfixed * [12.96] cmsy10%
 <20.74>sfixed * [15.555] cmsy10%
 <24.88>sfixed * [18.66] cmsy10%
 }{}
\DeclareMathAlphabet{\escorr@alph}{U}{escorr@fam}{m}{n}
\newcommand*{\escorr@macro}[2]{\savebox{0}{$#1\escorr@alph{\mathchar"710D}\n@space$}
                               \savebox{1}[\wd0]{\hss$#1\mathchar"002F\n@space$\hss}
                               \dimen0=\ht1 \advance\dimen0by\dp1 \divide\dimen0by16
                               \mbox{\rlap{\usebox{1}}\raisebox{\dimen0}{\usebox{0}}}}
\DeclareRobustCommand{\Memptyset}{{\mathpalette\escorr@macro{}}}
\DeclareRobustCommand{\Mbfemptyset}{{\pmb{\pmb{\Memptyset}}}}
\newtheorem{Thm}{Theorem}
\newtheorem{Prop}{Proposition}
\theoremstyle{remark}
\newtheorem{remark}{Remark}
\newtheorem{example}{Example}
\newcommand{\wb}{\overline}% bar
\newcommand{\diam}{\mathop{\mathrm{diam}}}% diameter
\newcommand{\W}{\mathcal{W}}% the set of finite volumes
\newcommand{\V}{\mathcal{V}}% subset of the set of finite volumes
\newcommand{\Bigsetminus}{\mathbin{\big\backslash}}% Big setminus
\newcommand{\cB}{{\mathscr{B}}}% calligraphic B
\newcommand{\cT}{{\mathscr{T}}}% calligraphic T
\newcommand{\cG}{{\mathscr{G}}}% calligraphic G
\newcommand{\pb}{{\mathrm{P}}}% f.-v. distributions
\newcommand{\Pb}{{\boldsymbol{P}}}% systems of f.-v. distributions
\newcommand{\PB}{{\mathbf{P}}}% random fields
\newcommand{\qb}{{\mathrm{Q}}}% ratios and limits
\newcommand{\Qb}{{\boldsymbol{Q}}}% specifications
\newcommand{\QB}{{\boldsymbol{\mathcal{Q}}}}% canonical specifications
\newcommand{\TE}{{\Delta}}% transition energies on T
\newcommand{\te}{{\delta}}% transition energies in f.-v.
\newcommand{\Te}{{\boldsymbol{\Delta}}}% transition energy fields
\newcommand{\tec}{{\Delta}}% limits of log-ratios
\newcommand{\Tec}{{\boldsymbol{\varDelta}}}% canonical transition energy systems
\newcommand{\hh}{{H}}% f.-v. Hamiltonians
\newcommand{\HH}{{\boldsymbol{H}}}% Hamiltonians
\newcommand{\bPhi}{{\boldsymbol{\Phi}}}% potentials
\begin{document}

\title{On the relationship of energy and probability in models of~classical statistical physics}
\author[1]{S.\ Dachian}
\author[2]{B.S.\ Nahapetian}
\affil[1]{\small University of Lille, CNRS, UMR 8524 --- Laboratoire Paul
  Painlev\'e, 59000 Lille, France}
\affil[1]{\small National Research Tomsk State University, International
  Laboratory of Statistics of Stochastic Processes and Quantitative Finance,
  634050 Tomsk, Russia}
\affil[2]{\small Institute of Mathematics, National Academy of Sciences of the
  Republic of Armenia, 0019 Yerevan, Armenia}
\footnotetext[1]{serguei.dachian@univ-lille.fr}
\footnotetext[2]{nahapet@instmath.sci.am}
\date{}
\maketitle

\begin{abstract}
In this paper we present a new point of view on the mathematical foundations of statistical physics of infinite volume systems.  This viewpoint is based on the newly introduced notions of transition energy function, transition energy field and one-point transition energy field.  The former of them, namely the transition energy function, is a generalization of the notion of relative Hamiltonian introduced by Pirogov and Sinai.  However, unlike the (relative) Hamiltonian, our objects are defined axiomatically by their natural and physically well-founded intrinsic properties. The developed approach allowed us to give a proper mathematical definition of the Hamiltonian without involving the notion of potential, to propose a justification of the Gibbs formula for infinite systems and to answer the problem stated by D.\ Ruelle of how wide the class of specifications, which can be represented in Gibbsian form, is.  Furthermore, this approach establishes a straightforward relationship between the probabilistic notion of (Gibbs) random field and the physical notion of (transition) energy, and so opens the possibility to directly apply probabilistic methods to the mathematical problems of statistical physics.

\bigskip
\noindent
\textbf{Keywords:} random field, specification, 1--specification, Gibbs formula, Hamiltonian, transition energy field, one-point transition energy field.
\end{abstract}

\section*{Introduction}

For the purpose of studying systems of statistical physics with infinite number of particles, Dobrushin in~\cite{D1, D2, D3} and, independently, Lanford and Ruelle in~\cite{LR} introduced the fundamental notion of Gibbs random field (also known as Gibbs state or Gibbs measure) and established its most important properties.  The interest in infinite volume physical systems was mostly prompted by the fact that it was in these systems that the occurrences of phase transitions were mathematically rigorously described.  The above-mentioned works laid the foundations of mathematical statistical physics and predetermined its further impressive development (see, for example, Ruelle~\cite{R,R2}, Sinai~\cite{Sinai}, Malyshev and Minlos~\cite{MM}, Georgii~\cite{Georg}).

In his approach to defining a Gibbs random field (as a probability measure on the space of infinite configurations of a physical system), Dobrushin was considering two families of functions: the specification and the Hamiltonian.  The specification is a family of probability distributions in finite volumes with infinite boundary conditions.  It is characterized by a property of consistency of its elements, which is quite natural from the probabilistic point of view.  As to the Hamiltonian (potential energy), in its standard form it is represented as a sum of finite-dimensional interactions given by some potential.  Based on such a representation of the Hamiltonian and on the Gibbs formula (also known as Boltzmann-Gibbs formula) tying the probability of a state of a physical system to the potential energy of that state, one can construct the main example of a specification: the Gibbsian specification with a given potential.  On this basis, Dobrushin introduced the notion of the Gibbs random field with a given potential, described the structure of the set of such random fields and solved the problems of their existence and uniqueness.

The questions considered in the present paper concern the foundations of the theory of Gibbs random fields.  One of the main outcomes of this work is that many general facts of the Gibbsian theory can be stated in purely probabilistic manner without involving the notion of potential.  This allows to give the theory a classical form: first, general definitions and statements, and then construction of models using representation theorems.

It should be noted that the expression of the Hamiltonian in form of a sum of interactions cannot be considered as its definition, because in case of such an approach, the properties of the Hamiltonian are not postulated, but are instead a consequence of the properties of the potential.  In fact, here we are dealing with a representation of the Hamiltonian in terms of potential.  This representation is quite convenient from the physical point of view, as it allows to easily construct specific models with the desired properties.  Meanwhile, from the mathematical point of view, in order to build a general theory of Gibbs random fields, it is necessary to have a proper definition of the Hamiltonian by means of its intrinsic properties.  This is all the more important, since in statistical physics, many considerations are based on the properties of the Hamiltonian itself (and how these properties are deduced from the properties of the potential is absolutely irrelevant).

The key concepts of our considerations are the newly introduced notions of transition energy function, transition energy field and one-point transition energy field.  Let us note that the former of them, namely the transition energy function, is a generalization of the notion of relative Hamiltonian introduced by Pirogov and Sinai in~\cite{PS1975} (see also Sinai~\cite{Sinai}, as well as, for example, Gross~\cite{Gross} for the case of more general potentials).  Let us also note that the notion of transition energy field is somehow related to the notion of relative energy, which proved itself useful in investigating the effect of transformations of equilibrium states (see, for example, Maes~\cite{Maes2,Maes1}).  However, unlike the Hamiltonian, the relative Hamiltonian and the relative energy, our objects are defined axiomatically by their natural and physically well-founded intrinsic properties.  Moreover, basing ourselves on the notion of transition energy field, we give a proper mathematical definition of the Hamiltonian without involving the notion of potential.

The argumentation used in this work follows the idea formulated by Sinai in~\cite{Sinai} that for infinite volume systems, the Hamiltonian cannot be defined as a function on the space of all configurations; however, it makes sense to consider the ``differences of the Hamiltonians'' for configurations which differ in a finite number of points.  At the same time, our considerations are in many ways related to Dobrushin's problem of description of specifications by means of systems of consistent one-point probability distributions with infinite boundary conditions, which was solved (by introducing the notion of 1--specification) in the works~\cite{DN1998, DN2001, DN2004} of the authors (see also Fern\'andez and Maillard~\cite{FM,FM2}).

The paper is organized as follows.  The necessary preliminaries are given in Section~\ref{Prelim}.  In Sections~\ref{TE} and~\ref{PE}, we introduce the notions of transition energy function and of (one-point) transition energy field, and show that the representation in Gibbsian form is a necessary and sufficient condition that a family of functions in finite volumes (resp.\ in lattice points) with infinite boundary conditions be a strictly positive specification (resp.\ 1--specification).  This result can, in our opinion, be considered as a justification of the Gibbs formula in the infinite volume case.  Moreover, its necessity part answers the problem formulated by D.\ Ruelle in~\cite{R2} of how wide the class of specifications, which can be represented in Gibbsian form, is.  Our answer is that any strictly positive specification admits Gibbsian representation (with some Hamiltonian satisfying our definition).  Finally, in Section~\ref{GenTheory}, the basic statements of the theory of Gibbs random fields are reformulated in terms of one-point transition energy fields.  The so-obtained description of random fields establishes a straightforward relationship between the probabilistic notion of (Gibbs) random field and the physical notion of (transition) energy, and so opens the possibility to directly apply probabilistic methods to mathematical problems of statistical physics.

\section{Preliminaries}
\label{Prelim}

Let $X$ be a non-empty finite set and $\mathbb{Z}^d$ be a $d$-dimensional integer lattice (a set of $d$-dimensional vectors with integer components), $d \geq 1$.  Without loss of generality, one can use any arbitrary countable set instead of $\mathbb {Z}^d$ in subsequent considerations, however we stick to tradition.

We assume that $X$ is endowed with the total $\sigma$-algebra $\cB$ and with the discrete topology $\cT$ $\bigl($that is, $\cB=\cT=\mathop{\mathrm{part}}(X)\bigr)$.

For any $S \subset \mathbb{Z}^d$, let $\W(S) = \{ V \subset S,\ \bil| V \bir| < \infty \}$ be the set of all finite subsets of~$S$.  For~$S = \mathbb{Z}^d$ we will use a simpler notation $\W$.  The elements of $\W$ are also often called \emph{finite volumes}.  Later on, the term \emph{lattice point} will be used not only for the elements $t \in \mathbb{Z}^d$, but also for the corresponding one-point volumes $\{t\}$.  The braces in the notation of such volumes will usually be omitted.

Let $S \subset \mathbb{Z}^d$, and let $f(\Lambda)$, $\Lambda\in \W(S)$, be some function on $\W(S)$.  In the sequel, the notation~$\lim \limits_{\Lambda \uparrow S} f(\Lambda) = a$, will mean that for any increasing sequence ${(\Lambda_n )}_{n \geq 1}$ of finite sets converging to~$S$~$\Bigl($that is, $\Lambda_n\in \W(S)$, ${\Lambda_n \subset \Lambda_{n+1}}$ and $\bigcup\limits_{n=1}^\infty \Lambda_n = S\Bigr)$, we have
\[
\lim \limits_{n \to \infty} f(\Lambda_n) = a.
\]

For any $S \subset \mathbb{Z}^d$, we denote $X^S = \bigl\{ ( x_t,\ t \in S ) \bigr\}$, $x_t \in X$, the space of \emph{configurations} on~$S$, that is, the set of all functions on $S$ taking values in $X$.  If $S = \Memptyset$, we assume $X^\Memptyset = \{ \Mbfemptyset \}$, where~$\Mbfemptyset$ is the \emph{empty configuration}.  For any $S, T \subset \mathbb{Z}^d$ such that $S \cap T = \Memptyset$ and any $x \in X^S$ and $y \in X^T$, we denote $x y$ the \emph{concatenation} of $x$ and $y$ defined as the configuration on $S \cup T$ equal to $x$ on~$S$ and to~$y$ on $T$.  For any $T \subset S\subset\mathbb{Z}^d$ and any ${x \in X^S}$, we denote $x_T$ the configuration $(x_t,\ t\in T)$ called \emph{restriction} of~$x$ on $T$.

A real-valued function $g$ on $X^S$, $S \subset \mathbb{Z}^d$, is called \emph{quasilocal} if
\[
\lim_{\Lambda \uparrow S} \sup_{x,y \in X^S \,:\, x_\Lambda = y_\Lambda} \bigl| g(x) - g(y) \bigr| = 0,
\]
or, equivalently, if $g$ is a uniform limit of \emph{local} functions (that is, functions depending only on values of configuration in a finite volume).  Note also that the quasilocality is nothing but continuity with respect to the topology $\cT^S$ and, taking into account that $X^S$ is compact, the strict positivity and the uniform nonnullness conditions are equivalent for quasilocal functions.

In the sequel, we adopt the following convention: if a finite subset of $\mathbb{Z}^d$ is used as a subscript in the notation of a function, then the function is defined on the space of configurations on this subset.  For $V\in\W$, a function $g_V$ (on the space $X^V$) will be also called \emph{function in (finite) volume}~$V$.  If a function in a finite volume $V\in\W$ has a superscript $\wb x \in X^{\mathbb{Z}^d \setminus V}$ in its notation, it will be called \emph{function in (finite) volume $V$ with (infinite) boundary condition $\wb x$}.  A fami\-ly~$\bigl\{ g_V^{\wb x},\ V \in \V,\, \wb x \in X^{\mathbb{Z}^d \setminus V} \bigr\}$ of functions, where $\V \subset \W$, will be called \emph{quasilocal} if for any $V \in \V$ and any $x\in X^V$, the function~$\wb x\mapsto g_V^{\wb x}(x)$ on $X^{\mathbb{Z}^d \setminus V}$ is quasilocal.

For $V \in \W$, a probability distribution on $X^V$ is given by a non-negative function~$\pb_V$ summing up to $1$.  In the case $V = \Memptyset$, there exist only one probability distribution defined by $\pb_\Memptyset (\Mbfemptyset) = 1$.  A probability distribution $\pb_V$, $V \in \W$, is called \emph{strictly positive} if $\pb_V (x) > 0$ for all $x \in X^V$.  For any $I\subset V \in \W$ and any probability distribution $\pb_V$, we denote ${(\pb_V)}_I$ the \emph{restriction} (\emph{marginal distribution}) of~$\pb_V$ on~$I$, defined by
\[
{(\pb_V)}_I (x) = \sum_{y \in X^{V \setminus I}} \pb_V(x y), \quad x \in X^I.
\]

Let $\cB^{\mathbb{Z}^d}$ be the $\sigma$-algebra generated by the cylinder (finite-dimensional) subsets of $X^{\mathbb{Z}^d }$.  A probability distribution $\PB$ on $\bigl( X^{\mathbb{Z}^d }, \cB^{\mathbb{Z}^d} \bigr)$ is called \emph{random field} (with \emph{state space} $X$).  The system of \emph{finite-dimensional distributions} of a random field $\PB$, will be denoted $\Pb = \{ \pb_V,\ V \in \W \}$.  Since the elements of $\Pb$ are the restrictions of the measure~$\PB$ on the corresponding $\sigma$-subalgebras of~$\cB^{\mathbb{Z}^d}$, they are consistent in the Kolmogorov sense: ${(\pb_V)}_I = \pb_I$ for all $I \subset V\in \W$.  The converse is also true (Kolmogorov's theorem): for any system of consistent probability distributions in finite volumes, there exists a unique random field having it as the system of finite-dimensional distributions.

In the present work we consider only \emph{strictly positive} random fields, that is, random fields having strictly positive finite-dimensional distributions.

A random field $\PB^{(p)}$ is called \emph{Bernoulli random field with parameter} $p\in(0,1)$ if for any~$V \in \W$, the finite-dimensional distribution of $\PB^{(p)}$ in the volume $V$ has the form
\[
\pb_V^{(p)} (x) = p^{\bil| x \bir|} (1-p)^{\bil| V \bir| - \bil| x \bir|}, \quad x \in X^V,
\]
where $\bil| x \bir| = \bigl| \{ t \in V \,:\, x_t = 1 \} \bigr|$ is the number of $1$'s in the configuration $x$.  This random field describes a system of independent random variables taking values $1$ and $0$ with probabilities $p$ and~$1-p$, respectively.

For a strictly positive random field $\PB$, its \emph{conditional distribution $\qb_V^{\widetilde x}$ in finite volume $V \in \W$ under finite boundary condition $\widetilde x \in X^\Lambda$}, $\Lambda \in \W(\mathbb{Z}^d\setminus V)$, is defined by
\[
\qb_V^{\widetilde x} (x) = \frac{\pb_{V \cup \Lambda} (x \widetilde x)}{\pb_\Lambda (\widetilde x)}, \quad x \in X^V.
\]

Let $\{ \partial t,\ t \in \mathbb{Z}^d \}$ be a system of neighborhoods on $\mathbb{Z}^d$, that is, a system of finite subsets $\partial t$ of the lattice $\mathbb{Z}^d$ such that $t \notin \partial t$, and $s \in \partial t$ if and only if $t \in \partial s$, $s \in \mathbb{Z}^d$.  A random field $\PB$ is called \emph{Markov random field} if its conditional distributions in finite volumes under finite boundary conditions satisfy the following property: for all $t \in \mathbb{Z}^d$, $\wb x \in X^{\mathbb{Z}^d \setminus t}$ and $V \in \W (\mathbb{Z}^d \setminus t)$ such that~$\partial t \subset V$, it holds $\qb_t^{\wb x_V} = \qb_t^{\wb x_{\partial t}}$.

For a random field $\PB$, the limits
\[
\qb_V^{\wb x} (x) = \lim_{\Lambda \uparrow \mathbb{Z}^d \setminus V} \qb_V^{\wb x_\Lambda} (x), \quad x \in X^V,
\]
exist for all $V \in \W$ and almost all (with respect to the measure $\PB$) configurations~$\wb x\in X^{\mathbb{Z}^d \setminus V}$.  Any family~$\Qb = \bigl\{ q_V^{\wb x},\ V \in \W,\, \wb x \in X^{\mathbb{Z}^d \setminus V} \bigr\}$ of probability distributions such that for any $V\in\W$, the equality $q_V^{\wb x} = \qb_V^{\wb x}$ holds for almost all (with respect to $\PB$) configurations $\wb x \in X^{\mathbb{Z}^d\setminus V}$, is called \emph{(version of) conditional distribution} of the random field~$\PB$.  In exactly the same way, any family~$\Qb^{(1)} = \bigl\{ q_t^{\wb x},\ t \in \mathbb{Z}^d,\, \wb x \in X^{\mathbb{Z}^d \setminus t} \bigr\}$ of probability distributions such that for any $t\in\mathbb{Z}^d$, the equality $q_t^{\wb x} = \qb_t^{\wb x}$ holds for almost all configurations $\wb x \in X^{\mathbb{Z}^d\setminus t}$, is called \emph{(version of) one-point conditional distribution} of the random field~$\PB$.

A family $\Qb = \bigl\{ q_V^{\wb x},\ V \in \W,\, \wb x \in X^{\mathbb{Z}^d \setminus V} \bigr\}$ of probability distributions in finite volumes with infinite boundary conditions is called \emph{specification} if its elements are consistent in the Dobrushin sense: for all $I \subset V \in \W$, $x \in X^{V \setminus I}$, $y \in X^I$ and $\wb x \in X^{\mathbb{Z}^d \setminus V}$, it holds
\begin{equation}
\label{DCC}
q_V^{\wb x} (x y) = {\bigl(q_V^{\wb x}\bigr)}_{V\setminus I} (x)\, q_I^{\wb x x} (y).
\end{equation}
These consistency conditions can be equivalently rewritten in the following form (see the paper~\cite{DN2004} of the authors): for all $I \subset V \in \W$, $x,u \in X^{V \setminus I}$, $y \in X^I$ and $\wb x \in X^{\mathbb{Z}^d \setminus V}$, it holds
\begin{equation}
\label{DCCeq}
q_V^{\wb x} (x y)\, q_{V\setminus I}^{\wb x y} (u) = q_V^{\wb x} (u y)\, q_{V\setminus I}^{\wb x y} (x).
\end{equation}

Let us note, that the elements of a conditional distribution of a random field $\PB$ satisfy the relations~\eqref{DCC} and~\eqref{DCCeq} for almost all (with respect to the measure $\PB$) configurations $\wb x \in X^{\mathbb{Z}^d \setminus V}$.  However, any random field has at least one version of conditional distribution being a specification (see Goldstein~\cite{Golds}, Preston~\cite{Prest} and Sokal~\cite{Sokal}).  Note also that if a strictly positive random field has a quasilocal version of conditional distribution, the latter is unique and is necessarily a specification (see the paper~\cite{DN2009} of the authors).  On the other hand, the quasilocality of a specification $\Qb$ guaranties the existence of a random field $\PB$ \emph{compatible with (the specification)}~$\Qb$, that is, having it as a version of conditional distribution (see, for example, Dobrushin~\cite{D1} or Georgii~\cite{Georg}).

In the present work we consider only \emph{strictly positive} specifications, that is, specification with strictly positive elements.

The main example of specification is the Gibbsian specification with a given potential. Let us give its definition.

A family $\bPhi = \bigl\{ \Phi_J,\ J \in \W \setminus \{\Memptyset\} \bigr\}$ of functions is called \emph{interaction potential} (or simply \emph{potential}) if for every $V \in \W$ and $\wb x \in X^{\mathbb{Z}^d \setminus V}$, there exist finite limits
\begin{equation}
\label{Phi_lim}
\hh_V^{\wb x} (x) = \lim_{\Lambda \uparrow \mathbb{Z}^d \setminus V} \sum_{\Memptyset \neq I \subset V} \sum_{ J \subset \Lambda} \Phi_{I\cup J} ( x_I \wb x_J ), \quad x \in X^V.
\end{equation}
The potential $\bPhi$ is said to be \emph{uniformly convergent} if the convergence in~\eqref{Phi_lim} is uniform with respect to~$\wb x$.  The family $\HH_\bPhi = \bigl\{ \hh_V^{\wb x},\ V \in \W,\, \wb x \in X^{\mathbb{Z}^d \setminus V} \bigr\}$ is called \emph{Hamiltonian corresponding to the potential~$\bPhi$}.

The family $\Qb_\bPhi=\bigl\{ q_V^{\wb x},\ V \in \W,\, \wb x \in X^{\mathbb{Z}^d \setminus V} \bigr\}$ of probability distributions
\[
q_V^{\wb x} (x) = \frac{\exp \{ - \hh_V^{\wb x} (x) \}}{\sum\limits_{z \in X^V} {\exp \{ - \hh_V^{\wb x} (z) \}} }, \quad x \in X^V,
\]
(defined by the Gibbs formula) is a strictly positive specification.  Indeed, its elements are strictly positive, and it is not difficult to verify that they satisfy the consistency conditions~\eqref{DCCeq}.  The specification $\Qb_\bPhi$ is called \emph{Gibbsian specification with potential~$\bPhi$}.

According to Dobrushin, a random field $\PB$ is called \emph{Gibbs random field with potential $\bPhi$} if it is compatible with the specification $\Qb_\bPhi$.  Note that if the potential $\bPhi$ is uniformly convergent, the Hamiltonian $\HH_\bPhi$ is quasilocal, which in turn implies the quasilocality of the specification $\Qb_\bPhi$ and thereby guarantees the existence of a Gibbs random field with potential~$\bPhi$.

A family $\Qb^{(1)} = \bigl\{ q_t^{\wb x},\ t \in \mathbb{Z}^d,\, \wb x \in X^{\mathbb{Z}^d \setminus t} \bigr\}$ of strictly positive probability distributions in lattice points with infinite boundary conditions is called \emph{strictly positive 1--specification} if its elements are consistent in the following sense: for all $t,s \in \mathbb{Z}^d$, $x,u \in X^{t}$, $y,v \in X^{s}$ and $\wb x \in X^{\mathbb{Z}^d \setminus \{ t,s\}}$, it holds
\begin{equation}
\label{Sogl1}
q_t^{\wb x y} (x)\,q_s^{\wb x x} (v)\,q_t^{\wb x v} (u)\,q_s^{\wb x u} (y) = q_s^{\wb x x} (y)\,q_t^{\wb x y} (u)\,q_s^{\wb x u} (v)\,q_t^{\wb x v} (x).
\end{equation}
It should be noted that this definition is a particular case of a more general (not requiring the strict positivity assumption) definition of 1--specification introduced in the paper~\cite{DN2004} of the authors.

The main example of 1--specification is the following.  Let $\bPhi$ be an interaction potential.  The family $\Qb_\bPhi^{(1)}=\bigl\{ q_t^{\wb x},\ t \in\mathbb{Z}^d,\, \wb x \in X^{\mathbb{Z}^d \setminus t} \bigr\}$ of one-point probability distributions
\[
q_t^{\wb x} (x) = \frac{\exp \{ - \hh_t^{\wb x} (x) \}}{\sum\limits_{z \in X^{t}} {\exp \{ - \hh_t^{\wb x} (z) \}} }, \quad x \in X^{t},
\]
where
\[
\hh_t^{\wb x} (x) = \lim_{\Lambda \uparrow \mathbb{Z}^d \setminus t}\;\sum_{ J \subset \Lambda} \Phi_{t\cup J } ( x \wb x_J ), \quad x \in X^{t},
\]
is a strictly positive 1--specification.  Indeed, its elements are strictly positive, and it is easy to see that they satisfy the consistency conditions~\eqref{Sogl1}.  The family $\HH_\bPhi^{(1)} = \bigl\{ \hh_t^{\wb x},\ t \in\mathbb{Z}^d,\, \wb x \in X^{\mathbb{Z}^d \setminus t} \bigr\}$ is called \emph{one-point Hamiltonian corresponding to the potential $\bPhi$}, while the 1--specification $\Qb_\bPhi^{(1)}$ is called \emph{Gibbsian 1--specification with potential~$\bPhi$}.  Note that if the potential $\bPhi$ is uniformly convergent, both the one-point Hamiltonian~$\HH_\bPhi^{(1)}$ and the 1--specification $\Qb_\bPhi^{(1)}$ are quasilocal.

Dobrushin's problem of description of specifications by means of systems of consistent one-point probability distributions with infinite boundary conditions~(see~\cite{D1, D4}) was solved by the authors in~\cite{DN1998,DN2001} under the weak positivity condition (as well as under the strict positivity condition) and in~\cite{DN2004} under the newly-introduced very weak positivity condition. The case of a general (not necessarily finite) state space was considered by Fern\'andez and Maillard in~\cite{FM} under an alternative nonnullness condition and in~\cite{FM2} under the extension to this case of the very weak positivity condition.

In the strictly positive case, the solution of Dobrushin's problem is given by the following theorem (see the works~\cite{DN1998,DN2001,DN2004} of the authors).

\begin{Thm}
\label{DN}
A family $\Qb^{(1)} = \bigl\{ q_t^{\wb x},\ t \in \mathbb{Z}^d,\, \wb x \in X^{\mathbb{Z}^d \setminus t} \bigr\}$ of functions in lattice points with infinite boundary conditions will be a subsystem of some strictly positive specifications $\Qb$ if and only if~$\Qb^{(1)}$ is a strictly positive 1--specification.  In this case, the specification $\Qb$ is uniquely determined.
\end{Thm}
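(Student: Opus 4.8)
The plan is to prove the two implications separately; throughout, strict positivity lets me form ratios of the (nonzero) distributions freely. For \emph{necessity}, suppose $\Qb^{(1)}$ consists of the one-point elements of a strictly positive specification $\Qb$. Applying the consistency condition~\eqref{DCC} to the two-point volume $V=\{t,s\}$ in its two orderings gives, for $x,u\in X^t$ and $y,v\in X^s$,
\[
q_{\{t,s\}}^{\wb x}(xy)=\bigl(q_{\{t,s\}}^{\wb x}\bigr)_t(x)\,q_s^{\wb x x}(y)=\bigl(q_{\{t,s\}}^{\wb x}\bigr)_s(y)\,q_t^{\wb x y}(x).
\]
I would then compute the ratio $q_{\{t,s\}}^{\wb x}(xy)\big/q_{\{t,s\}}^{\wb x}(uv)$ along the two staircase paths $(x,y)\to(x,v)\to(u,v)$ and $(x,y)\to(u,y)\to(u,v)$; along each path the two-point marginals cancel and only one-point factors survive. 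Equating the two resulting expressions and clearing denominators yields exactly~\eqref{Sogl1}, and strict positivity of $\Qb^{(1)}$ is inherited from that of $\Qb$.

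For the \emph{sufficiency} I would build the extending specification by a normalized telescoping product. Fix $V\in\W$, a boundary condition $\wb x$, a reference configuration $\hat x\in X^V$, and an ordering of the sites of $V$. For $x\in X^V$, define an unnormalized weight $r_V^{\wb x}(x)$ as the product of the single-site ratios $q_t^{\wb x\eta}(\,\cdot\,)/q_t^{\wb x\eta}(\hat x_t)$ accumulated as one updates the sites of $V$ one at a time, passing from $\hat x$ to $x$ (here $\eta$ is the current configuration on the other sites of $V$, which together with $\wb x$ forms the boundary condition of the one-point distribution). Setting $q_V^{\wb x}(x)=\frac{r_V^{\wb x}(x)}{\sum_{z\in X^V}r_V^{\wb x}(z)}$ produces a strictly positive probability distribution, and for $V=\{t\}$ it returns $q_t^{\wb x}$, so $\Qb^{(1)}$ is the one-point subsystem of the family $\Qb=\{q_V^{\wb x}\}$ thus obtained.

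The heart of the argument, and what I expect to be the main obstacle, is to show that $r_V^{\wb x}$ does not depend on the chosen ordering, i.e.\ that the accumulated product is the same along any sequence of single-site updates joining $\hat x$ to $x$. I would argue that any such update sequence can be brought to the canonical one by two elementary moves: merging two consecutive updates at the \emph{same} site (consistent by the obvious telescoping of ratios) and transposing two consecutive updates at \emph{distinct} sites $t,s$. Writing out the equality of the products before and after such a transposition, with the sites outside $\{t,s\}$ frozen and absorbed into the boundary condition, one finds precisely the relation~\eqref{Sogl1}. Hence the product is invariant, the ratios $r_V^{\wb x}(x)/r_V^{\wb x}(x')$ are intrinsic, and consistency~\eqref{DCC} follows: for $I\subset V$ one computes $r_V^{\wb x}(xy)$ along a path that first drives $V\setminus I$ to $x$ and then varies only within $I$, which exhibits the within-$I$ factor as $r_I^{\wb x x}(y)$; normalizing over $y\in X^I$ then gives $q_V^{\wb x}(xy)=\bigl(q_V^{\wb x}\bigr)_{V\setminus I}(x)\,q_I^{\wb x x}(y)$.

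Finally, for \emph{uniqueness}, any strictly positive specification $\Qb$ with one-point subsystem $\Qb^{(1)}$ must satisfy, by~\eqref{DCC} with $I$ a single point $t\in V$,
\[
\frac{q_V^{\wb x}(x)}{q_V^{\wb x}(x')}=\frac{q_t^{\wb x\,x_{V\setminus t}}(x_t)}{q_t^{\wb x\,x_{V\setminus t}}(x'_t)}
\]
whenever $x,x'\in X^V$ differ only at $t$. Connecting any two configurations by single-site changes then determines every ratio $q_V^{\wb x}(x)/q_V^{\wb x}(x')$ from $\Qb^{(1)}$ alone, and the normalization pins down $q_V^{\wb x}$ itself; thus the extension is unique, which completes the plan.
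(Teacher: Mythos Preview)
Your argument is correct and, at its mathematical core, identical to the paper's: the key step in both is to build $q_V^{\wb x}$ from a telescoping object over single-site updates and to establish path independence by reducing to transpositions of adjacent sites, where the required identity is exactly~\eqref{Sogl1}. The difference is purely one of packaging. You work directly with multiplicative ratios $r_V^{\wb x}(x)=\prod q_t/q_t$, whereas the paper takes logarithms first, introducing the one-point transition energy field $\Te^{(1)}$ via $\te_t^{\wb x}(x,u)=\ln\bigl(q_t^{\wb x}(x)/q_t^{\wb x}(u)\bigr)$, and then proves Theorem~\ref{DN} by the chain $\Qb^{(1)}\leftrightarrow\Te^{(1)}\leftrightarrow\Te\leftrightarrow\Qb$ (Theorems~\ref{GibbsForm1-spec}, \ref{Delta1-Delta}, \ref{GibbsFormSpec}); your path-independence argument is precisely formula~\eqref{Delta_V-Delta_t} and its justification in the proof of Theorem~\ref{Delta1-Delta}, written additively instead of multiplicatively. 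Your route is more elementary and self-contained; the paper's route costs extra definitions but yields the transition energy framework as a byproduct, which carries the physical interpretation (energy conservation, Gibbs formula) that is the paper's main point.
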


Note that as a matter of fact, an explicit formula expressing the elements of the specification~$\Qb$ by the elements of the 1--specification $\Qb^{(1)}$ was obtained in the proof of this theorem (see also the work~\cite{DN2009} of the authors).  This formula in particular implies that the specification $\Qb$ is quasilocal if and only if the 1--specification $\Qb^{(1)}$ is quasilocal, and that the set of random fields compatible with~$\Qb$ coincides with the set of random fields \emph{compatible with (the 1--specification)} $\Qb^{(1)}$, that is, having $\Qb^{(1)}$ as a version of one-point conditional distribution.  Thereby, it becomes possible to reformulate Dobrushin's theory (of descriptions of random fields by means of specifications) in terms of 1--specifications.

\section{Transition energy}
\label{TE}

First of all, let us note that the potential energy (Hamiltonian) of a physical system is defined up to an additive constant and therefore cannot be directly measured. So, only the change of the potential energy has a physical meaning, and not the potential energy itself.

The main idea of the approach we propose to justify the Gibbs formula is to use, instead of the Hamiltonian, the function giving the energy cost of transition of the physical system from one state to another.  Unlike the Hamiltonian, the transition energy function is uniquely determined (for a given physical system) and, as it will be shown latter, admits a description by its intrinsic properties that have a clear physical meaning.  Besides, in many cases the use of transition energy greatly simplifies the considerations.

\subsection{Finite volume systems}

Let $V \in \W$ be some finite volume.  Consider a physical system in the volume $V$.  For each pair of states $x, u \in X^V$, denote $\te_V (x,u)$ the amount of energy needed to move the system from the state~$x$ to the state~$u$.  Clearly,~$\te_V$ satisfies the relation
\begin{equation}
\label{Delta_sum}
\te_V (x,u) = \te_V (x,z) + \te_V (z,u), \quad x, u, z \in X^V,
\end{equation}
which is nothing but the energy conservation law.  These physical considerations leads us to the following definition.

A function~$\te_V$ of two variables (defined on the set of all pairs of configurations from $X^V$) will be called \emph{transition energy in volume $V$} if it satisfies the relation~\eqref{Delta_sum}.  Note that in the case $V = \Memptyset$, there is only one transition energy in volume $\Memptyset$ defined by $\te_\Memptyset(\Mbfemptyset,\Mbfemptyset) = 0$.  Note also that the relation~\eqref{Delta_sum} implies
\[
\te_V (x,x) = 0 \quad \text{and} \quad \te_V (x,u) = - \te_V (u,x), \quad x, u \in X^V.
\]

The following simple proposition provides a full description of strictly positive probability distributions in volume $V$ in terms of transition energies in volume $V$ and will also play an important role in the study of infinite volume systems.

\begin{Prop}
\label{GibbsFormProb}
Let $V\in\W$.  A function $\pb_V$ will be a strictly positive probability distribution if and only if it has the Gibbsian form
\begin{equation}
\label{GFP1}
\pb_V(x) = \frac{ \exp \{\te_V(x,u)\} }{\sum \limits_{z \in X^V} \exp \{\te_V(z,u)\} }, \quad x \in X^V,
\end{equation}
where $\te_V$ is some transition energy in volume $V$, and the configuration $u\in X^V$ is arbitrary.  In this case, $\te_V$ is uniquely determined by $\pb_V$ according to the formula
\begin{equation}
\label{GFP2}
\te_V (x,u) = \ln \frac{\pb_V (x)}{\pb_V (u)}, \quad x,u \in X^V.
\end{equation}
\end{Prop}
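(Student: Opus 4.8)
The plan is to establish the claimed equivalence in both directions and then the uniqueness, all by elementary manipulations exploiting the additive cocycle structure~\eqref{Delta_sum} of a transition energy together with the strict positivity of~$\pb_V$. Throughout, the natural candidate for the transition energy attached to a given $\pb_V$ is the function prescribed by~\eqref{GFP2}, namely $\te_V(x,u) = \ln\bigl(\pb_V(x)/\pb_V(u)\bigr)$, which is well defined precisely because $\pb_V$ is strictly positive.

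For the sufficiency (``if'') direction, I would start from a function $\pb_V$ given by~\eqref{GFP1} with some transition energy $\te_V$ and an arbitrary $u \in X^V$. Strict positivity is immediate, since every exponential is positive; and summing~\eqref{GFP1} over $x \in X^V$ collapses the numerator into the denominator, giving total mass~$1$, so that $\pb_V$ is a strictly positive probability distribution. Before proceeding I would also record that the right-hand side of~\eqref{GFP1} does not depend on the choice of $u$: writing $\te_V(x,u) = \te_V(x,u') + \te_V(u',u)$ via~\eqref{Delta_sum} factors out $\exp\{\te_V(u',u)\}$ from both numerator and denominator, where it cancels. This is what makes the phrase ``the configuration $u$ is arbitrary'' in the statement meaningful.

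For the necessity (``only if'') direction, I would take a strictly positive probability distribution $\pb_V$ and define $\te_V$ by~\eqref{GFP2}. Checking that this $\te_V$ is a transition energy amounts to verifying~\eqref{Delta_sum}, which follows at once from the additivity of the logarithm: $\te_V(x,z) + \te_V(z,u) = \ln\bigl(\pb_V(x)/\pb_V(z)\bigr) + \ln\bigl(\pb_V(z)/\pb_V(u)\bigr) = \te_V(x,u)$. Substituting this $\te_V$ into the right-hand side of~\eqref{GFP1} and using $\sum_{z \in X^V} \pb_V(z) = 1$ recovers $\pb_V(x)$, so $\pb_V$ indeed admits the Gibbsian form.

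It remains to prove uniqueness, which is also the only point demanding any care. Assuming $\pb_V$ is represented as in~\eqref{GFP1} by some transition energy $\te_V$ (with representing configuration $u'$), I would form the ratio $\pb_V(x)/\pb_V(u)$, in which the common denominator of~\eqref{GFP1} cancels, leaving $\exp\{\te_V(x,u') - \te_V(u,u')\}$; the cocycle relation~\eqref{Delta_sum} (equivalently the antisymmetry $\te_V(u',u) = -\te_V(u,u')$ it entails) simplifies the exponent to $\te_V(x,u)$. Taking logarithms yields~\eqref{GFP2}, so any transition energy representing $\pb_V$ must coincide with the one given there. No step presents a genuine obstacle; the argument is entirely computational, and the features requiring attention are merely the systematic use of the cocycle identity to eliminate the free parameter $u$ and thereby to secure both the $u$-independence in~\eqref{GFP1} and the uniqueness.
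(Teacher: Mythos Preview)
Your proposal is correct and follows essentially the same approach as the paper: both directions rest on the elementary observation that~\eqref{GFP2} defines a transition energy whenever $\pb_V>0$, and that the cocycle relation~\eqref{Delta_sum} makes the Gibbsian expression~\eqref{GFP1} independent of the reference configuration. The only cosmetic differences are the order in which you treat the two implications and your use of an auxiliary configuration $u'$ in the uniqueness step (the paper simply takes $u'=u$ and invokes $\te_V(u,u)=0$).
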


\begin{proof}
Let $\pb_V$ be a strictly positive probability distribution.  Clearly, the function $\te_V$ defined by the formula~\eqref{GFP2} satisfies the relation~\eqref{Delta_sum} and, therefore, is a transition energy in volume $V$.  Further, for any $u\in X^V$, we can write
\[
\pb_V(x) = \frac{ \pb_V(x) }{\sum \limits_{z \in X^V} \pb_V(z) } = \frac{ \pb_V(x)/\pb_V(u) }{\sum \limits_{z \in X^V} \pb_V(z)/\pb_V(u) } = \frac{ \exp \{\te_V(x,u)\} }{\sum \limits_{z \in X^V} \exp \{\te_V(z,u)\} },\quad x\in X^V,
\]
and so $\pb_V$ has the required Gibbsian form~\eqref{GFP1}.

Let now $\te_V$ be a transition energy in volume $V$, and let us define $\pb_V$ by the formula~\eqref{GFP1}.  First of all, let us show the correctness of this definition, that is, check that $\pb_V$ does not depend on the choice of~$u$.  Indeed, by virtue of the relation~\eqref{Delta_sum}, for any $w \in X^V$, we have
\[
\frac{\exp \{\te_V(x,u)\}}{\sum \limits_{z \in X^V} {\exp \{\te_V(z,u)\}}} = \frac{\exp \{\te_V(x,w) + \te_V(w,u)\}}{\sum \limits_{z \in X^V} {\exp \{\te_V(z,w) + \te_V(w,u)\} }} = \frac{\exp \{\te_V(x,w)\}}{\sum \limits_{z \in X^V} {\exp \{\te_V(z,w)\}}}.
\]
Also, it is evident that $\pb_V$ is a strictly positive probability distribution.

Finally, if $\pb_V$ have the Gibbian form~\eqref{GFP1}, we can write
\[
\frac{\pb_V(x)}{\pb_V(u)} = \frac{\exp\{\te_V(x,u)\}}{\exp\{\te_V(u,u)\}} = \exp\{\te_V(x,u)\},\quad x,u \in X^V,
\]
and hence $\te_V$ is uniquely determined by $\pb_V$ according to the formula~\eqref{GFP2}.
\end{proof}

Let us note that the general solution of the functional equation~\eqref{Delta_sum} has the form
\[
\te_V (x,u) = \hh_V (u) - \hh_V (x),
\]
where $\hh_V$ is an arbitrary function which, for a physical system corresponding to $\te_V$, can be interpreted as potential energy (Hamiltonian).  Thus, in the case of finite volume systems, the Hamiltonian is not required to posses any intrinsic properties, and the notion of the Hamiltonian does not need a special definition.

Note also that $\hh_V$ is not uniquely determined by $\te_V$.  It is rather determined up to an arbitrary additive constant (corresponding to the choice of the zero level of potential energy).  However, for the probability distribution $\pb_V$ corresponding to $\te_V$, we can write
\[
\pb_V (x) = \frac{ \exp \{ \te_V (x,u) \} }{ \sum\limits_{z \in X^V } { \exp \{ \te_V (z,u) \} }} = \frac{ \exp \{ - \hh_V (x) \} }{\sum\limits_{z \in {X^V}} {\exp \{ - \hh_V (z) \}} }, \quad x \in X^V.
\]
These relations yield, in particular, the correctness of the Gibbs formula.  Namely, although $\hh_V$ is not uniquely determined for a given physical system (and hence a given $\te_V$), the corresponding Gibbs distribution does not depend on the choice of $H_V$.  Moreover, according to Proposition~\ref{GibbsFormProb}, the representation in Gibbsian form is a necessary and sufficient condition that a function be a probability distribution.  This fact can, in our opinion, be considered as a justification of the Gibbs formula (in the finite volume case) on the base of the energy conservation law.

\begin{remark}
Let us note that the Gibbs formula has a number of justifications based on different physical considerations as, for example, the free energy minimum principle or the maximum entropy principle (see, for example, Pfister~\cite{Pfister}).  Let us bring one of them which, in our opinion, is the simplest.  If we assume that, up to a normalizing constant, the probability is a strictly positive, continuous and strictly decreasing function of the energy $\bigl($that is, $\pb_V(x)=C f\bigl(\hh_V(x)\bigr)$, $x\in X^V\bigr)$, and that the choice of the zero level of the potential energy (addition of a constant to the function $\hh_V$) does not change the probability distribution~$\pb_V$, it is not difficult to show that $f(t)=\alpha\exp\{-\beta t\}$ with some strictly positive parameters $\alpha$ and $\beta$.  As the constant~$\alpha$ cancels out after the normalization $\Bigl($since~$C^{-1}=\sum\limits_{z \in {X^V}}f\bigl(\hh_V(z)\bigr)=\alpha\sum\limits_{z \in {X^V}}\exp\bigl(-\beta\hh_V(z)\bigr)\Bigr)$, without loss of generality one can take~$\alpha=1$.  Note also that since the probability should not depend on the choice of the energy units, the presence of the multiplicative constant $\beta$ before the energy~$\hh_V$ is natural (roughly speaking, the multiplication by $\beta$ makes the energy dimensionless).  In the present work, we always include the constant $\beta$ in the energy~$\hh_V$ (that is, directly consider dimensionless energies).\qed
\end{remark}

\subsection{Infinite volume systems}

Now, let us turn to the systems whose states are configurations on $\mathbb{Z}^d$.  Note that the transition energy for such systems cannot be defined on all pairs of configurations, because on pairs of configurations that differ in an infinite number of points, it will, as a rule, take infinite value.  In this connection, already Sinai noted in his book~\cite{Sinai} that for infinite volume systems, the Hamiltonian cannot be defined as a function on $X^{\mathbb{Z}^d}$; however, it makes sense to consider the ``differences of the Hamiltonians'' for configurations $\wb x, \wb u \in X^{\mathbb{Z}^d}$ that differ in a finite number of points.  This is the way we will follow.

Let $T$ be the set of all pairs of configurations on $\mathbb{Z}^d$ that differ in at most a finite number of points.  For $V \in \W$ and $\wb x \in X^{\mathbb{Z}^d \setminus V}$, we denote $T_V^{\wb x}$ the set of all pairs of configurations on $\mathbb{Z}^d$ coinciding with $\wb x$ on $\mathbb{Z}^d \setminus V$.  Clearly, $T = \bigcup \limits_{V \in \W\vphantom{\int^T}}\:\bigcup \limits_{\smash{\wb x \in X^{\mathbb{Z}^d \setminus V}}\vphantom{\int^T}} T_V^{\wb x}$.

Any function $\TE$ on $T$ satisfying the relation
\begin{equation}
\label{Delta0}
\TE (\wb u, \wb w) = \TE (\wb u, \wb z) + \TE (\wb z, \wb w)
\end{equation}
for all $(\wb u, \wb z), (\wb z, \wb w) \in T$ will be called \emph{transition energy function} (or simply \emph{transition energy}).

A basic example of transition energy function is the following.  Let  $\bPhi = \bigl\{ \Phi_J,\ J \in \W \setminus \{\Memptyset\} \bigr\}$ be some finite-range interaction potential (that is, $\Phi_J=0$ as soon as $\diam(J)>R$ for some $R\geq 0$), and put
\[
\TE_\bPhi(\wb u, \wb w)=\sum_{J \in \W \setminus \{\Memptyset\}}\bigl(\Phi_J(\wb w_J)-\Phi_J(\wb u_J)\bigr),\quad (\wb u,\wb w)\in T.
\]
The function $\TE_\bPhi$ on $T$ is well defined (since the above sum contains a finite number of terms) and clearly satisfies the relation (8).  Let us note, that (up to a change of sign) $\TE_\bPhi$ is the \emph{relative Hamiltonian corresponding to the potential~$\bPhi$}, introduced by Pirogov and Sinai in~\cite{PS1975} (see also Sinai~\cite{Sinai}, as well as, for example, Gross~\cite{Gross} for the case of more general potentials).  However, we would like to stress here, that in contrary to the relative Hamiltonian (defined in terms of some given potential), the transition energy function is defined axiomatically by its intrinsic properties.

\subsubsection{Transition energy field}

A transition energy $\TE$ defines a family $\Te = \bigl\{ \te_V^{\wb x},\ V \in \W,\, \wb x \in X^{\mathbb{Z}^d \setminus V} \bigr\}$ of functions of two variables in finite volumes with infinite boundary conditions by
\begin{equation}
\label{Delta_V_def}
\te_V^{\wb x} (x, u) = \TE (x \wb x, u \wb x), \quad x,u \in X^V.
\end{equation}
Since
\[
\te_V^{\wb x} (x,u) = \TE (x \wb x, u \wb x) = \TE (x \wb x, z \wb x) + \TE (z \wb x, u \wb x) = \te_V^{\wb x} (x,z) + \te_V^{\wb x} (z,u), \quad x,u,z \in X^V,
\]
the elements of $\Te$ are transition energies in finite volumes (with infinite boundary conditions).  Besides, they are consistent in the following sense: for all $V, I \in \W$ such that~$V \cap I = \Memptyset$ and all~$x,u \in X^V$, $y \in X^I$ and ${\wb x \in X^{\mathbb{Z}^d \setminus (V \cup I)}}$, it holds
\begin{equation}
\label{Delta2}
\te_{V \cup I}^{\wb x} (x y, u y) = \te_V^{\wb x y} (x,u).
\end{equation}
To see this, it is sufficient to note that
\[
\te_{V \cup I}^{\wb x} (x y, u y) = \TE (x y \wb x, u y \wb x) \quad \text{and} \quad \te_V^{\wb x y} (x,u) = \TE (x y \wb x, u y \wb x).
\]

A family $\Te = \bigl\{ \te_V^{\wb x},\ V \in \W,\, \wb x \in X^{\mathbb{Z}^d \setminus V} \bigr\}$ of transition energies in finite volumes with infinite boundary conditions will be called \emph{transition energy field} if its elements satisfy the consistency conditions~\eqref{Delta2}.

Let us note that the relations \eqref{Delta_V_def} and \eqref{Delta2} are very natural from the physical point of view. Indeed, when a system moves from one configuration (state) to another, the energy is spent only on changing the state of the system in the volume in which these configurations are different, while the state of the system in the volume in which they coincide plays the role of a boundary condition.

It follows from the above considerations that any transition energy $\TE$ generates a transition energy field $\Te$.  The converse is also true.

\begin{Prop}
Let $\Te = \bigl\{ \te_V^{\wb x},\ V \in \W,\, \wb x \in X^{\mathbb{Z}^d \setminus V} \bigr\}$ be a transition energy field.  Then there exists a unique transition energy $\TE$ generating the transition energy field $\Te$.
\end{Prop}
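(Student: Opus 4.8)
The plan is to construct $\TE$ directly from the field $\Te$ and then verify three properties in turn: well-definedness, the cocycle relation~\eqref{Delta0}, and that the constructed $\TE$ indeed generates $\Te$ via~\eqref{Delta_V_def}; uniqueness will then come essentially for free. Given a pair $(\wb u, \wb w) \in T$, the two configurations differ on some finite set $D$, so for every finite volume $V \supseteq D$ they share a common boundary condition $\wb x^{(V)} := \wb u_{\mathbb{Z}^d \setminus V} = \wb w_{\mathbb{Z}^d \setminus V}$. I would define
\[
\TE(\wb u, \wb w) := \te_V^{\wb x^{(V)}}(\wb u_V, \wb w_V)
\]
for any such $V$, this being the natural candidate dictated by~\eqref{Delta_V_def}.

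The main obstacle — and really the only substantive point — is to show that this value does not depend on the choice of $V \supseteq D$. Since any two admissible volumes $V_1, V_2$ both sit inside $V_1 \cup V_2$, it suffices to compare $V$ with $V' = V \cup I$ for $I$ disjoint from $V$. Here I would write $y := \wb u_I = \wb w_I$ (the two configurations agree on $I$ because $I \cap D = \Memptyset$), so that $\wb u_{V'} = (\wb u_V)\,y$ and $\wb w_{V'} = (\wb w_V)\,y$, and then apply the consistency condition~\eqref{Delta2} to obtain
\[
\te_{V'}^{\wb x^{(V')}}\bigl((\wb u_V)\,y,\,(\wb w_V)\,y\bigr) = \te_V^{\wb x^{(V')} y}(\wb u_V, \wb w_V).
\]
It then remains to observe that the boundary condition $\wb x^{(V')} y$ on $\mathbb{Z}^d \setminus V$ coincides with $\wb x^{(V)}$, which is immediate from the definitions, and this gives equality of the two candidate values.

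With well-definedness in hand, the relation~\eqref{Delta0} follows painlessly. Given $(\wb u, \wb z), (\wb z, \wb w) \in T$, the set of points where $\wb u, \wb z, \wb w$ are not all equal is finite, so I would choose a single finite volume $V$ containing it; then the three configurations share one boundary condition $\wb x$ outside $V$, and $(\wb u,\wb w)\in T$ as well. The relation~\eqref{Delta0} becomes just the in-volume additivity~\eqref{Delta_sum} of the transition energy $\te_V^{\wb x}$ applied to $\wb u_V, \wb z_V, \wb w_V$, so $\TE$ is a transition energy function. That $\TE$ generates $\Te$ is simply the definition read off for the pair $(x \wb x, u \wb x)$ with volume $V$. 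Finally, uniqueness is immediate: any $\TE'$ generating $\Te$ must satisfy $\TE'(\wb u, \wb w) = \te_V^{\wb x^{(V)}}(\wb u_V, \wb w_V)$ once $(\wb u, \wb w)$ is written in the form~\eqref{Delta_V_def}, which is exactly the value of our $\TE$.
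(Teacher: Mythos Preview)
Your proof is correct and follows essentially the same approach as the paper's: define $\TE$ via~\eqref{Delta_V_def}, use~\eqref{Delta2} for well-definedness, and reduce~\eqref{Delta0} to~\eqref{Delta_sum} by choosing a common finite volume. Your treatment is in fact slightly more detailed than the paper's, which simply asserts that~\eqref{Delta2} ``guarantees the correctness of the definition'' and leaves the nested-volume reduction implicit.
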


\begin{proof}
For each pair of configurations $(x\wb x, u\wb x) \in T_V^{\wb x}$, where $V \in \W$, $x,u \in X^V$ and $\wb x \in X^{\mathbb{Z}^d \setminus V}$, we put
\[
\TE(x \wb x, u \wb x) = \te_V^{\wb x} (x,u).
\]
This defines the function $\TE$ on the whole $T$, while the equation~\eqref{Delta2} guarantees the correctness of the definition.

To show that $\TE$ is a transition energy, let us verify~\eqref{Delta0}.  Let $(\wb u, \wb z)$, $(\wb z, \wb w)\in T$.  Then there exist $V \in \W$ and $\wb x \in X^{\mathbb{Z}^d \setminus V}$ such that $(\wb u, \wb z), (\wb z, \wb w) \in T_V^{\wb x}$, and hence $\wb u = u \wb x$, $\wb z = z \wb x$ and $\wb w = w \wb x$, where $u = \wb u_V$, $z = \wb z_V$, $w = \wb w_V$ and $\wb x = \wb u_{\mathbb{Z}^d \setminus V} = \wb z_{\mathbb{Z}^d \setminus V} = \wb w_{\mathbb{Z}^d \setminus V}$.  Taking into account that $\te_V^{\wb x}$ is a transition energy in volume $V$ $\bigl($and hence satisfy the relation~\eqref{Delta_sum}$\bigr)$, we get
\begin{align*}
\TE (\wb u, \wb w) &= \TE ( u \wb x, w \wb x) = \te_V^{\wb x} (u,w) = \te_V^{\wb x} (u,z) + \te_V^{\wb x} (z,w) \\[7pt]
&= \TE ( u \wb x, z \wb x) + \TE ( z \wb x, w \wb x) = \TE (\wb u, \wb z) + \TE (\wb z, \wb w).
\end{align*}

Finally, the uniqueness of $\TE$ is trivial.
\end{proof}

So, specifying a transition energy $\TE$ on $T$ is equivalent to specifying a transition energy field~$\Te$, and in the subsequent considerations we will exclusively use  the latter.

Note that the conditions~\eqref{Delta2} of consistency of the elements of a transition energy field are equivalent to the following ones: for all $V,I \in \W$ such that $V \cap I = \Memptyset$ and all $x,u \in X^V$, $y,v \in X^I$ and $\wb x\in X^{\mathbb{Z}^d\setminus (V\cup I)}$, it holds
\begin{equation}
\label{Delta2.2}
\te_{V \cup I}^{\wb x} (x y, u v) = \te_V^{\wb x y} (x,u) + \te_{I}^{\wb x u} (y,v).
\end{equation}
Indeed, let~\eqref{Delta2} be fulfilled.  Since $\te_{V \cup I}^{\wb x}$ is a transition energy in volume $V \cup I$, we can write
\[
\te_{V \cup I}^{\wb x} (x y, u v) = \te_{V \cup I}^{\wb x} (x y, u y) + \te_{V \cup I}^{\wb x} (u y, u v) = \te_V^{\wb x y} (x,u) + \te_{I}^{\wb x u} (y,v).
\]
Conversely, putting $v=y$ in~\eqref{Delta2.2} and taking into account that $\te_{I}^{\wb x u} (y,y) = 0$ (since $\te_{I}^{\wb x u}$ is a transition energy in volume $I$), we obtain~\eqref{Delta2}.

Note also that the relation~\eqref{Delta2.2} has the following (natural) physical meaning: the energy $\te_{V \cup I}^{\wb x} (x y, u v)$ needed to change the state of a system from $x y$ to $u v$ in the volume $V\cup I$ under the boundary condition $\wb x$ is equal to the sum of energies $\te_V^{\wb x y} (x,u)$ and $\te_{I}^{\wb x u} (y,v)$ needed to first change the state of the system from $x$ to $u$ in the volume $V$ under the boundary condition $\wb x y$, and then from $y$ to $v$ in the volume $I$ already under the boundary condition $\wb x u$.

The theorem given below establishes a relationship between a strictly positive specification $\Qb$ and a transition energy field $\Te$.

\begin{Thm}
\label{GibbsFormSpec}
A family $\Qb = \bigl\{ q_V^{\wb x},\ V \in \W,\, \wb x \in X^{\mathbb{Z}^d \setminus V} \bigr\}$ of functions in finite volumes with infinite boundary conditions will be a strictly positive specification if and only if its elements have the Gibbsian form
\[
q_V^{\wb x} (x) = \frac{ \exp \{\te_V^{\wb x} (x,u)\} }{\sum \limits_{z \in X^V} {\exp \{\te_V^{\wb x} (z,u)\} }}, \quad x \in X^V,
\]
where $\Te=\bigl\{ \te_V^{\wb x},\ V \in \W,\, \wb x \in X^{\mathbb{Z}^d \setminus V} \bigr\}$ is some transition energy field, and the configuration $u\in X^V$ is arbitrary.  In this case, the transition energy field $\Te$ is uniquely determined.
\end{Thm}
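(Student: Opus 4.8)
The plan is to reduce this infinite-volume statement to the finite-volume Proposition~\ref{GibbsFormProb} applied volume by volume, the only genuinely new ingredient being the translation between the Dobrushin consistency~\eqref{DCCeq} of a specification and the consistency~\eqref{Delta2} of a transition energy field.

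First I would handle both implications simultaneously by freezing the boundary condition. For each fixed $V\in\W$ and $\wb x\in X^{\mathbb{Z}^d\setminus V}$, the map $x\mapsto q_V^{\wb x}(x)$ is a function on the finite configuration space $X^V$, so Proposition~\ref{GibbsFormProb} applies to it directly: $q_V^{\wb x}$ is a strictly positive probability distribution if and only if it has the Gibbsian form with some transition energy $\te_V^{\wb x}$ in volume $V$ and arbitrary $u\in X^V$, and in that case $\te_V^{\wb x}$ is uniquely determined by~\eqref{GFP2}, namely $\te_V^{\wb x}(x,u)=\ln\bigl(q_V^{\wb x}(x)/q_V^{\wb x}(u)\bigr)$. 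This already settles, for every volume separately, the strict positivity and the normalization, the independence of the Gibbsian form from the choice of $u$, and the uniqueness of $\Te$. What is left is to relate the \emph{consistency} of the two families.

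The key step is then to prove that $\Qb$ satisfies~\eqref{DCCeq} if and only if the associated family $\Te$ of log-ratios satisfies~\eqref{Delta2}. Using strict positivity, I would divide the instance of~\eqref{DCCeq} with large volume $V\cup I$ and inner volume $I$ through to its equivalent ratio form (its outer volume being then $(V\cup I)\setminus I=V$):
\[
\frac{q_{V\cup I}^{\wb x}(xy)}{q_{V\cup I}^{\wb x}(uy)}=\frac{q_V^{\wb x y}(x)}{q_V^{\wb x y}(u)}, \quad x,u\in X^V,\ y\in X^I.
\]
Taking logarithms and invoking the log-ratio description of $\Te$ turns the left-hand side into $\te_{V\cup I}^{\wb x}(xy,uy)$ and the right-hand side into $\te_V^{\wb x y}(x,u)$, which is exactly~\eqref{Delta2}. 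Since every manipulation is reversible, the two consistency conditions are equivalent, and combining this with the per-volume analysis above yields both directions of the theorem at once.

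I do not anticipate a real obstacle, as the computation is mechanical once the passage to log-ratios is made; the only point requiring attention is the alignment of the index conventions, because the large volume appearing in the specification consistency~\eqref{DCCeq} must be taken to be $V\cup I$ in order to match the transition energy field consistency~\eqref{Delta2}. Conceptually, the theorem simply expresses that the map sending a strictly positive probability distribution to its log-ratios carries Dobrushin consistency precisely onto transition-energy consistency, thereby lifting Proposition~\ref{GibbsFormProb} from a single finite volume to an entire consistent system.
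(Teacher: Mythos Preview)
Your proposal is correct and follows essentially the same route as the paper: apply Proposition~\ref{GibbsFormProb} volume by volume to obtain the log-ratio description $\te_V^{\wb x}(x,u)=\ln\bigl(q_V^{\wb x}(x)/q_V^{\wb x}(u)\bigr)$, and then observe that the ratio form of the Dobrushin consistency~\eqref{DCCeq} (with large volume $V\cup I$) is precisely the transition-energy consistency~\eqref{Delta2} after taking logarithms. The paper carries out the two implications separately rather than simultaneously, but the computations are identical to yours.
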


\begin{proof}
Let $\Qb = \bigl\{ q_V^{\wb x},\ V \in \W,\, \wb x \in X^{\mathbb{Z}^d \setminus V} \bigr\}$ be a strictly positive specification.  According to Proposition~\ref{GibbsFormProb}, for each $V \in \W$ and~$\wb x \in X^{\mathbb{Z}^d \setminus V}$, the strictly positive probability distribution $q_V^{\wb x}$ has the required Gibbsian form with
\[
\te_V^{\wb x} (x,u) = \ln \frac{q_V^{\wb x}(x)}{q_V^{\wb x}(u)}, \quad x,u \in X^V,
\]
and $\te_V^{\wb x}$ is a (unique) transition energy in volume $V$.  To conclude the proof of the necessity, it remains to show that the family $\Te=\bigl\{ \te_V^{\wb x},\ V \in \W,\, \wb x \in X^{\mathbb{Z}^d \setminus V} \bigr\}$ is a transition energy field, that is, to check that its elements satisfy the consistency conditions~\eqref{Delta2}.

By virtue of the conditions~\eqref{DCCeq} of consistency of the elements of~$\Qb$, for all $V,I \in \W$ such that~$V \cap I = \Memptyset$ and all~$x,u \in X^V$, $y \in X^I$ and $\wb x\in X^{\mathbb{Z}^d\setminus (V\cup I)}$, we have
\[
\frac{ q_{V \cup I}^{\wb x}(x y) }{ q_{V \cup I}^{\wb x} (u y) } = \frac{ q_V^{\wb x y} (x) }{ q_V^{\wb x y} (u) },
\]
which yields
\[
\te_{V \cup I}^{\wb x} (x y, u y) = \ln \frac{q_{V \cup I}^{\wb x} (x y)}{q_{V \cup I}^{\wb x} (u y)} = \ln \frac{q_V^{\wb x y} (x)}{q_V^{\wb x y} (u)} = \te_V^{\wb x y} (x,u).
\]

Let now $\Te=\bigl\{ \te_V^{\wb x},\ V \in \W,\, \wb x \in X^{\mathbb{Z}^d \setminus V} \bigr\}$ be a transition energy field.  According to Proposition~\ref{GibbsFormProb}, for each $V \in \W$ and~$\wb x \in X^{\mathbb{Z}^d \setminus V}$, the function
\[
q_V^{\wb x} (x) = \frac{\exp \{\te_V^{\wb x} (x,u)\}}{\sum \limits_{z \in X^V} {\exp \{\te_V^{\wb x} (z,u)\} }}, \quad x \in X^V,
\]
is a strictly positive probability distribution (not depending on $u \in X^V$).  It remains to show that the elements of the family $\Qb=\bigl\{ q_V^{\wb x},\ V \in \W,\, \wb x \in X^{\mathbb{Z}^d \setminus V} \bigr\}$ are consistent in the Dobrushin sense or, equivalently, that for all $V,I \in \W$ such that $V \cap I = \Memptyset$ and all~$x,u \in X^V$, $y \in X^I$ and~$\wb x\in X^{\mathbb{Z}^d\setminus (V\cup I)}$, it holds
\[
\frac{ q_{V \cup I}^{\wb x}(x y) }{ q_{V \cup I}^{\wb x} (u y) } = \frac{ q_V^{\wb x y} (x) }{ q_V^{\wb x y} (u) }.
\]

The validity of the last relation follows from the chain of equalities
\[
\ln \frac{q_{V \cup I}^{\wb x} (x y)}{q_{V \cup I}^{\wb x} (u y)} = \te_{V \cup I}^{\wb x} (x y, u y) = \te_V^{\wb x y} (x,u) = \ln \frac{q_V^{\wb x y} (x)}{q_V^{\wb x y} (u)},
\]
which is true by virtue of the conditions~\eqref{Delta2} of consistency of the elements of $\Te$.
\end{proof}

It follows from this theorem that any strictly positive specification admits a representation in terms of a transition energy field.  In addition, it is not difficult to see that the specification will be quasilocal if and only if the corresponding transition energy field is quasilocal (for all $V \in \W$ and $x,u\in X^V$, the function $\wb x\mapsto \te_V^{\wb x}(x,u)$ on $X^{\mathbb{Z}^d \setminus V}$ is quasilocal).

We call a specification \emph{Gibbsian}, if it is strictly positive and quasilocal (see, for example, Criterion 1.1 in the work~\cite{DN2009} of the authors).  Thus, a strictly positive specification will be Gibbsian if and only if the corresponding transition energy field is quasilocal.

\subsubsection{One-point transition energy field}

The mentioned in Section~\ref{Prelim} solution of Dobrushin's problem provides a description of random fields by means of 1--specifications, which is much more economical and efficient than the usual description based on specifications.  The aim of the present section is to establish a relationship between strictly positive 1--specifications and appropriately consistent systems of transition energies in lattice points with infinite boundary conditions, similar to the relationship between strictly positive specifications and transition energy fields.

Consider the subsystem $\Te^{(1)} = \bigl\{ \te_t^{\wb x},\ t \in \mathbb{Z}^d,\, \wb x \in X^{\mathbb{Z}^d \setminus t} \bigr\}$ of some transition energy field $\Te$.  The elements of $\Te^{(1)}$ are transition energies in lattice points with infinite boundary conditions.  Moreover, they are consistent in the following sense: for all $t,s \in \mathbb{Z}^d$, $x,u \in X^{t}$, $y,v \in X^{s}$ and~$\wb x \in X^{\mathbb{Z}^d \setminus \{ t,s\}}$, it holds
\begin{equation}
\label{DeltaT2}
\te_t^{\wb x y} (x,u) + \te_s^{\wb x u} (y,v) = \te_s^{\wb x x} (y,v) + \te_t^{\wb x v} (x,u).
\end{equation}
Indeed, according to the consistency conditions~\eqref{Delta2.2} of the elements of~$\Te$, both sides of the relation~\eqref{DeltaT2} are equal to~$\te_{\{t,s\}}^{\wb x} (x y, u v)$.

A family $\Te^{(1)} = \bigl\{ \te_t^{\wb x},\ t \in \mathbb{Z}^d,\, \wb x \in X^{\mathbb{Z}^d \setminus t} \bigr\}$ of transition energies in lattice points with infinite boundary conditions will be called \emph{one-point transition energy field} if its elements satisfy the consistency conditions~\eqref{DeltaT2}.

Let us note that the relation~\eqref{DeltaT2} has the following (natural) physical meaning.  Suppose it is necessary to move a physical system from the state $x y\wb x$ to the state~$u v\wb x$.  This can be done in two ways: either by first changing the state of the system from $x$ to $u$ in the point $t$ under the boundary condition~$\wb x y$, and then from $y$ to $v$ in the point $s$ already under the boundary condition~$\wb x u$; or by first changing the state of the system from $y$ to $v$ in the point $s$ under the boundary condition~$\wb x x$, and then from $x$ to $u$ in the point~$t$ already under the boundary condition~$\wb x v$.  The amount of energy spent in the first case will be equal to $\te_t^{\wb x y} (x,u) + \te_s^{\wb x u} (y,v)$, while in the second case it will be equal to $\te_s^{\wb x x} (y,v) + \te_t^{\wb x v} (x,u)$.  Naturally, the same amount of energy must be spent in both cases.

\begin{Thm}
\label{Delta1-Delta}
A family $\Te^{(1)} = \bigl\{ \te_t^{\wb x},\ t \in \mathbb{Z}^d,\, \wb x \in X^{\mathbb{Z}^d \setminus t} \bigr\}$ of functions of two variables in lattice points with infinite boundary conditions will be a subsystem of some transition energy field $\Te$ if and only if $\Te^{(1)}$ is a one-point transition energy field.  In this case, the transition energy field $\Te$ is uniquely determined.
\end{Thm}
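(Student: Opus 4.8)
The two implications call for rather different work. The necessity — that the one-point subsystem of a transition energy field satisfies \eqref{DeltaT2} — is already established in the discussion preceding the statement: for two distinct lattice points $t,s$, both sides of \eqref{DeltaT2} equal $\te_{\{t,s\}}^{\wb x}(xy,uv)$ by the consistency condition \eqref{Delta2.2}. So the content of the theorem is the sufficiency (reconstruction of a full field $\Te$ from the one-point data $\Te^{(1)}$) together with uniqueness, and I would treat uniqueness first, since it dictates the construction.

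For uniqueness I would iterate \eqref{Delta2.2}, peeling off one site at a time from a volume $V=\{t_1,\dots,t_n\}$. This forces
\[
\te_V^{\wb x}(x,u)=\sum_{k=1}^{n}\te_{t_k}^{\,\wb x\, u_{t_1}\cdots u_{t_{k-1}}\, x_{t_{k+1}}\cdots x_{t_n}}(x_{t_k},u_{t_k}),
\]
that is, the energy of the transition $x\mapsto u$ is the sum of the one-point energies along the path that rewrites the sites of $V$ one after another, the already-rewritten sites entering the boundary at their $u$-value and the remaining ones at their $x$-value. Since the right-hand side involves only elements of $\Te^{(1)}$, any two transition energy fields extending $\Te^{(1)}$ coincide; this is the uniqueness claim, and the same formula is the only possible candidate for the construction.

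For existence it is cleanest not to take the displayed formula as the definition directly, for then \eqref{Delta_sum} would require a separate check. Instead I would fix a reference symbol $o\in X$, write $o_V$ for the configuration equal to $o$ on all of $V$, let $\hh_V^{\wb x}(x)$ be the one-point path energy carrying $o_V$ to $x$ (rewriting one site at a time, as above), and set $\te_V^{\wb x}(x,u)=\hh_V^{\wb x}(u)-\hh_V^{\wb x}(x)$, so that \eqref{Delta_sum} holds automatically and each $\te_V^{\wb x}$ is a transition energy in $V$. Three verifications then remain. First, for $V=\{t\}$ one recovers the given element: since $\te_t^{\wb x}$ is a transition energy in $t$, $\te_t^{\wb x}(x,u)=\te_t^{\wb x}(o,u)-\te_t^{\wb x}(o,x)=\hh_t^{\wb x}(u)-\hh_t^{\wb x}(x)$, so $\Te^{(1)}$ is indeed the one-point subsystem of $\Te$. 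Second, the consistency \eqref{Delta2}: computing $\hh_{V\cup I}^{\wb x}$ by first rewriting all of $I$ and then all of $V$ yields the block splitting $\hh_{V\cup I}^{\wb x}(xy)=\hh_I^{\wb x o_V}(y)+\hh_V^{\wb x y}(x)$, whose first term is unchanged when $x$ is replaced by $u$; subtracting gives $\te_{V\cup I}^{\wb x}(xy,uy)=\hh_V^{\wb x y}(u)-\hh_V^{\wb x y}(x)=\te_V^{\wb x y}(x,u)$, which is exactly \eqref{Delta2}.

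The crux, and the point I expect to be the main obstacle, is the remaining claim that $\hh_V^{\wb x}(x)$ is well defined, i.e.\ independent of the order in which the sites of $V$ are rewritten (the block splitting just used tacitly relies on this as well). Here I would use \eqref{DeltaT2} as the elementary commutation move: interchanging two adjacent steps of the path, the rewriting of a site $t$ and that of the next site $s$ with all other sites frozen into a boundary $\wb w$, changes the two affected summands from $\te_t^{\wb w o}(o,x_t)+\te_s^{\wb w x_t}(o,x_s)$ to $\te_s^{\wb w o}(o,x_s)+\te_t^{\wb w x_s}(o,x_t)$, and \eqref{DeltaT2} (applied to $t,s$, boundary $\wb w$, with the evident substitutions) asserts precisely that these two sums agree. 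Since any ordering of the sites of $V$ is reachable from any other through successive adjacent transpositions, the total sum is order-independent, and hence $\hh_V^{\wb x}$, and thus $\Te$, is well defined. The only delicate point in writing this cleanly is the bookkeeping of the frozen boundary $\wb w$ at each transposition, so that the correct instance of \eqref{DeltaT2} is being invoked.
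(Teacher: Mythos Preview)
Your proposal is correct, and the core idea matches the paper's: define $\te_V^{\wb x}$ by a one-point path sum, prove order-independence via adjacent transpositions using~\eqref{DeltaT2}, and then verify the consistency~\eqref{Delta2}. The uniqueness argument is identical.

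The one genuine organizational difference is worth pointing out. The paper defines the path sum directly from $x$ to $u$ (formula~\eqref{Delta_V-Delta_t}), which means the transition-energy property~\eqref{Delta_sum} is not automatic and must be checked separately; the paper does this by an induction on~$|V|$ requiring a somewhat long chain of equalities. You instead fix a reference configuration $o_V$, set $\hh_V^{\wb x}(x)$ to be the path energy from $o_V$ to $x$, and put $\te_V^{\wb x}(x,u)=\hh_V^{\wb x}(u)-\hh_V^{\wb x}(x)$, so~\eqref{Delta_sum} holds for free and the induction is eliminated. The price is nil: the order-independence argument you need for $\hh_V^{\wb x}$ is exactly the same adjacent-transposition check the paper performs, and your block-splitting verification of~\eqref{Delta2} is, if anything, shorter than the paper's. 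So your repackaging is a mild streamlining of the same proof.
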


\begin{proof}
The necessity was already shown in the beginning of this section.  We therefore turn to the proof of the sufficiency.

Let $\Te^{(1)} = \bigl\{ \te_t^{\wb x},\ t \in \mathbb{Z}^d,\, \wb x \in X^{\mathbb{Z}^d \setminus t} \bigr\}$ be a transition energy field.  For all $V \in \W$, $x, u \in X^V$ and ${\wb x \in X^{\mathbb{Z}^d \setminus V}}$, we put
\begin{equation}
\label{Delta_V-Delta_t}
\te_V^{\wb x} (x,u) = \te_{t_1}^{\wb x x_2 x_3\cdots x_n } ( x_1, u_1 )
+ \te_{t_2}^{\wb x u_1 x_3 \cdots x_n} ( x_2, u_2 ) + \cdots + \te_{t_n}^{\wb x u_1 u_2 \cdots u_{n - 1} } ( x_n, u_n ),
\end{equation}
where $t_1, t_2,\ldots,t_n$ is some enumeration of the elements of the set $V$, and $x_i = x_{t_i}$, $u_i = u_{t_i}$ for \hbox{$i = 1,2,\ldots,n$} (in the case~$V=\Memptyset$, we use the convention that the sum of zero elements is zero).  Let us show that~$\te_V^{\wb x} (x,u)$ does not depend on the order of enumeration of the elements of~$V$.  Indeed, since any permutation of the elements $t_1 ,t_2 ,\ldots,t_n $ can be decomposed into a finite number of transpositions of adjacent elements $t_{k-1}$ and $t_k$, $k = 2,\ldots,n$, it is sufficient to show that
\begin{multline*}
\te_{t_{k-1}}^{\wb x u_1 \cdots u_{k-2} x_k \cdots x_n} ( x_{k-1}, u_{k-1} ) + \te_{t_k}^{\wb x u_1 \cdots u_{k-1} x_{k+1} \cdots x_n} ( x_k, u_k ) \\[7pt]
= \te_{t_k}^{\wb x u_1 \cdots u_{k-2} x_{k-1} x_{k+1} \cdots x_n} ( x_k, u_k ) + \te_{t_{k-1}}^{\wb x u_1 \cdots u_{k-2} u_k x_{k+1} \cdots x_n} ( x_{k-1}, u_{k-1} ),
\end{multline*}
which is guaranteed by the consistency conditions~\eqref{DeltaT2}.

Further, let $V,I\in \W$ be such that $V\cap I=\Memptyset$, and let $t_1, t_2,\ldots,t_n$ and $s_1, s_2,\ldots,s_m$ be some enumerations of the elements of the sets $V$ and $I$, respectively.  For any $x,u \in X^V$, $y,v \in X^I$ and~$\wb x \in X^{\mathbb{Z}^d \setminus (V \cup I)}$, we can write
\begin{align*}
\te_{V \cup I}^{\wb x} (x y, u v) &= \te_{t_1}^{\wb x x_2 x_3\cdots x_n y } ( x_1, u_1 ) + \te _{t_2}^{\wb x u_1 x_3 \cdots x_n y} ( x_2, u_2 ) + \cdots + \te_{t_n}^{\wb x u_1 u_2 \cdots u_{n - 1} y} ( x_n, u_n ) \\[7pt]
&\qquad\qquad + \te_{s_1}^{\wb x u y_2 y_3 \cdots y_m} ( y_1, v_1 ) + \te_{s_2}^{\wb x u v_1 y_3 \cdots y_m} ( y_2, v_2 ) + \cdots + \te_{s_m}^{\wb x u v_1 v_2 \cdots v_{m - 1} } ( y_m, v_m ) \\[7pt]
&= \te_V^{\wb x y} (x,u) + \te_I^{\wb x u} (y,v),
\end{align*}
where $y_j = y_{s_j}$, $v_j = v_{s_j}$ for \hbox{$j = 1,2,\ldots,m$}, and so the relation~\eqref{Delta2.2} holds.

To conclude the proof of the sufficiency, it remains to show that the function $\te_V^{\wb x}$ defined by~\eqref{Delta_V-Delta_t} is a transition energy in volume $V$.  If~$V=\Memptyset$ or~$V$ is a singleton, this assertion is trivially true.  Let us now suppose that this assertion is valid for some $V \in \W\setminus\{\Memptyset\}$ and show that it is also valid for~$t \cup V$, where $t \in \mathbb{Z}^d \setminus V$ is arbitrary.  Indeed, for all $x,u,z \in X^t$, $y,v,w \in X^V$ and~$\wb x \in X^{\mathbb{Z}^d \setminus (t \cup V)}$, we can write
\begin{align*}
\te_{t \cup V}^{\wb x} (x y, z w) &+ \te_{t \cup V}^{\wb x} (z w, u v) = \te_t^{\wb x y} (x,z) + \te_V^{\wb x z} (y,w) + \te_V^{\wb x z} (w,v) + \te_t^{\wb x v} (z,u) \\[7pt]
&= \te_t^{\wb x y} (x,z) + \te_V^{\wb x z} (y,v) + \te_t^{\wb x v} (z,u) = \te_t^{\wb x y} (x,z) + \te_{t \cup V}^{\wb x} (z y, u v) \\[7pt]
&= \te_t^{\wb x y} (x,z) + \te_t^{\wb x y} (z,u) + \te_V^{\wb x u} (y,v) = \te_t^{\wb x y} (x,u) + \te_V^{\wb x u} (y,v) = \te_{t \cup V}^{\wb x} (x y, u v).
\end{align*}

Finally, note that the property~\eqref{Delta2.2} of a transition energy field $\Te$ implies the formula~\eqref{Delta_V-Delta_t}, and hence the elements of $\Te$ are uniquely determined by the elements of $\Te^{(1)}$.
\end{proof}

Let us note that the formula~\eqref{Delta_V-Delta_t} expressing the elements of the transition energy field $\Te$ by the elements of the one-point transition energy field $\Te^{(1)}$ is of independent interest too.  In particular, it implies that a transition energy field will be quasilocal if and only if the one-point transition energy field it contains is quasilocal.  Note also that this formula has a clear physical meaning similar to that of the relation~\eqref{Delta2.2}.

The theorem given below establishes a relationship between a one-point transition energy field~$\Te^{(1)}$ and a strictly positive 1--specification $\Qb^{(1)}$.

\begin{Thm}
\label{GibbsForm1-spec}
A family $\Qb^{(1)} = \bigl\{ q_t^{\wb x},\ t \in \mathbb{Z}^d,\, \wb x \in X^{\mathbb{Z}^d \setminus t} \bigr\}$ of functions in lattice points with infinite boundary conditions will be a strictly positive 1--specification if and only if its elements have the Gibbsian form
\[
q_t^{\wb x} (x) = \frac{\exp \{\te_t^{\wb x} (x,u)\}}{\sum \limits_{z \in X^t} {\exp \{\te_t^{\wb x} (z,u)\} }}, \quad x \in X^t,
\]
where $\Te^{(1)} = \bigl\{ \te_t^{\wb x},\ t \in \mathbb{Z}^d,\, \wb x \in X^{\mathbb{Z}^d \setminus t} \bigr\}$ is some one-point transition energy field, and the configuration $u\in X^t$ is arbitrary.  In this case, the one-point transition energy field $\Te^{(1)}$ is uniquely determined.
\end{Thm}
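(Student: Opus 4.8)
The plan is to mirror the proof of Theorem~\ref{GibbsFormSpec}, using Proposition~\ref{GibbsFormProb} at each single lattice point and then translating the multiplicative consistency of $\Qb^{(1)}$ into the additive consistency of $\Te^{(1)}$ (and conversely). The whole argument rests on the equivalence between the 1--specification consistency~\eqref{Sogl1} and the one-point transition energy field consistency~\eqref{DeltaT2}, which is obtained simply by passing between products and logarithms.

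For the necessity, I would start from a strictly positive 1--specification $\Qb^{(1)}$. Each $q_t^{\wb x}$ is a strictly positive probability distribution on $X^t$, so Proposition~\ref{GibbsFormProb} gives the required Gibbsian form with the uniquely determined transition energy
\[
\te_t^{\wb x} (x,u) = \ln \frac{q_t^{\wb x}(x)}{q_t^{\wb x}(u)}, \quad x,u \in X^t.
\]
It then remains to check that the family $\Te^{(1)}=\bigl\{\te_t^{\wb x}\bigr\}$ satisfies~\eqref{DeltaT2}. The key step is to take the logarithm of~\eqref{Sogl1}: the eight resulting terms group pairwise into exactly the four transition energies of~\eqref{DeltaT2}, since each difference $\ln q_t^{\wb x y}(x)-\ln q_t^{\wb x y}(u)$ equals $\te_t^{\wb x y}(x,u)$, and likewise for the three other ratios. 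This yields~\eqref{DeltaT2} directly.

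For the sufficiency, I would take a one-point transition energy field $\Te^{(1)}$ and define the $q_t^{\wb x}$ by the Gibbs formula. By Proposition~\ref{GibbsFormProb} each $q_t^{\wb x}$ is a strictly positive probability distribution not depending on $u$, with $q_t^{\wb x}(x)/q_t^{\wb x}(u)=\exp\{\te_t^{\wb x}(x,u)\}$. To verify~\eqref{Sogl1}, I would divide both of its sides by $q_t^{\wb x y}(u)\,q_s^{\wb x x}(v)\,q_t^{\wb x v}(u)\,q_s^{\wb x u}(v)$, which shows it is equivalent to
\[
\frac{q_t^{\wb x y}(x)}{q_t^{\wb x y}(u)}\cdot\frac{q_s^{\wb x u}(y)}{q_s^{\wb x u}(v)} = \frac{q_s^{\wb x x}(y)}{q_s^{\wb x x}(v)}\cdot\frac{q_t^{\wb x v}(x)}{q_t^{\wb x v}(u)};
\]
taking logarithms, this is precisely~\eqref{DeltaT2}, which holds by assumption, and exponentiating recovers~\eqref{Sogl1}.

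Finally, the uniqueness of $\Te^{(1)}$ is immediate from the uniqueness part of Proposition~\ref{GibbsFormProb}, as the Gibbsian form forces $\te_t^{\wb x}(x,u)=\ln\bigl(q_t^{\wb x}(x)/q_t^{\wb x}(u)\bigr)$. I expect no genuine obstacle here; the only thing to watch is the bookkeeping of boundary conditions and ratios, so that the eight factors of~\eqref{Sogl1} match, term by term, the four transition energies of~\eqref{DeltaT2}. This is exactly the one-point counterpart of the reduction of~\eqref{DCCeq} to~\eqref{Delta2} already carried out in Theorem~\ref{GibbsFormSpec}.
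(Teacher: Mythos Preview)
Your proposal is correct and follows essentially the same approach as the paper's own proof: both directions use Proposition~\ref{GibbsFormProb} pointwise and then pass between the multiplicative consistency~\eqref{Sogl1} and the additive consistency~\eqref{DeltaT2} via the same ratio identity $\frac{q_t^{\wb x y}(x)}{q_t^{\wb x y}(u)}\cdot\frac{q_s^{\wb x u}(y)}{q_s^{\wb x u}(v)} = \frac{q_s^{\wb x x}(y)}{q_s^{\wb x x}(v)}\cdot\frac{q_t^{\wb x v}(x)}{q_t^{\wb x v}(u)}$ and logarithms, with uniqueness coming from Proposition~\ref{GibbsFormProb} as you indicate.
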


\begin{proof}
Let $\Qb^{(1)} = \bigl\{ q_t^{\wb x},\ t \in \mathbb{Z}^d,\, \wb x \in X^{\mathbb{Z}^d \setminus t} \bigr\}$ be a strictly positive 1--specification.  According to Proposition~\ref{GibbsFormProb}, for each $t \in \mathbb{Z}^d$ and~$\wb x \in X^{\mathbb{Z}^d \setminus t}$, the strictly positive probability distribution $q_t^{\wb x}$ has the required Gibbsian form with
\[
\te_t^{\wb x} (x,u) = \ln \frac{q_t^{\wb x}(x)}{q_t^{\wb x}(u)}, \quad x,u \in X^t,
\]
and $\te_t^{\wb x}$ is a (unique) transition energy in lattice point $t$.  To conclude the proof of the necessity, it remains to show that the family $\Te^{(1)}=\bigl\{ \te_t^{\wb x},\ t \in \mathbb{Z}^d,\, \wb x \in X^{\mathbb{Z}^d \setminus t} \bigr\}$ is a one-point transition energy field, that is, to check that its elements satisfy the consistency conditions~\eqref{DeltaT2}.

By virtue of the conditions~\eqref{Sogl1} of consistency of the elements of~$\Qb^{(1)}$, for all $t,s \in \mathbb{Z}^d$, $x,u \in X^{t}$, $y,v \in X^{s}$ and~$\wb x \in X^{\mathbb{Z}^d \setminus \{ t,s\}}$, we have
\[
\frac{q_t^{\wb x y} (x)}{q_t^{\wb x y} (u)} \, \frac{q_s^{\wb x u} (y)}{q_s^{\wb x u} (v)} = \frac{q_s^{\wb x x} (y)}{q_s^{\wb x x} (v)} \, \frac{q_t^{\wb x v} (x)}{q_t^{\wb x v} (u)},
\]
which yields
\[
\te_t^{\wb x y} (x,u) + \te_s^{\wb x u} (y,v) = \ln \frac{q_t^{\wb x y} (x)}{q_t^{\wb x y} (u)} \, \frac{q_s^{\wb x u} (y)}{q_s^{\wb x u} (v)} = \ln \frac{q_s^{\wb x x} (y)}{q_s^{\wb x x} (v)} \, \frac{q_t^{\wb x v} (x)}{q_t^{\wb x v} (u)} = \te_s^{\wb x x} (y,v) + \te_t^{\wb x v} (x,u).
\]

Let now $\Te^{(1)} = \bigl\{ \te_t^{\wb x},\ t \in \mathbb{Z}^d,\, \wb x \in X^{\mathbb{Z}^d \setminus t} \bigr\}$ be a one-point transition energy field.  According to Proposition~\ref{GibbsFormProb}, for each $t \in \mathbb{Z}^d$ and~$\wb x \in X^{\mathbb{Z}^d \setminus t}$, the function
\[
q_t^{\wb x} (x) = \frac{\exp \{\te_t^{\wb x} (x,u)\}}{\sum \limits_{z \in X^t} {\exp \{\te_t^{\wb x} (z,u)\} }}, \quad x \in X^t,
\]
is a strictly positive probability distribution (not depending on $u \in X^t$).  It remains to show that the elements of the family $\Qb^{(1)}=\bigl\{ q_t^{\wb x},\ t \in \mathbb{Z}^d,\, \wb x \in X^{\mathbb{Z}^d \setminus t} \bigr\}$ satisfy the consistency conditions~\eqref{Sogl1} or, equivalently, that for all $t,s \in \mathbb{Z}^d$, $x,u \in X^{t}$, $y,v \in X^{s}$ and~$\wb x \in X^{\mathbb{Z}^d \setminus \{ t,s\}}$, it holds
\[
\frac{q_t^{\wb x y} (x)}{q_t^{\wb x y} (u)} \, \frac{q_s^{\wb x u} (y)}{q_s^{\wb x u} (v)} = \frac{q_s^{\wb x x} (y)}{q_s^{\wb x x} (v)} \, \frac{q_t^{\wb x v} (x)}{q_t^{\wb x v} (u)}.
\]

The validity of the last relation follows from the chain of equalities
\[
\ln \frac{q_t^{\wb x y} (x)}{q_t^{\wb x y} (u)} \, \frac{q_s^{\wb x u} (y)}{q_s^{\wb x u} (v)} = \te_t^{\wb x y} (x,u) + \te_s^{\wb x u} (y,v) = \te_s^{\wb x x} (y,v) + \te_t^{\wb x v} (x,u) = \ln \frac{q_s^{\wb x x} (y)}{q_s^{\wb x x} (v)} \, \frac{q_t^{\wb x v} (x)}{q_t^{\wb x v} (u)},
\]
which is true by virtue of the conditions~\eqref{DeltaT2} of consistency of the elements of $\Te^{(1)}$.
\end{proof}

It follows from this theorem that any strictly positive 1--specification admits a representation in terms of a one-point transition energy field.  In addition, it is not difficult to see that the 1--specification will be quasilocal if and only if the corresponding one-point transition energy field is quasilocal.

We call a 1--specification \emph{Gibbsian}, if it is strictly positive and quasilocal.  Thus, a strictly positive 1--specification will be Gibbsian if and only if the corresponding one-point transition energy field is quasilocal.

\begin{remark}
The obtained results can be naturally depicted in the following commutative diagram:
\begin{equation}
\label{ComDiag}
\begin{tabular}{ccc}
$\Te^{(1)}$ & $\xlongleftrightarrow{\textrm{~Th.~\ref{Delta1-Delta}~}}$ & $\Te$ \\
\llap{\scriptsize Th.~\ref{GibbsForm1-spec}~}$\updownarrow$ & & $\updownarrow$\rlap{\scriptsize~Th.~\ref{GibbsFormSpec}} \\
$\Qb^{(1)}$ & $\xlongleftrightarrow{\textrm{~Th.~\ref{DN}~}}$ & $\Qb$ \\
\end{tabular}
\end{equation}

Let us emphasize that the chain of transitions $\Qb^{(1)} \leftrightarrow \Te^{(1)} \leftrightarrow \Te \leftrightarrow \Qb$ obtained in the present work provides a new, simpler and physically well-founded proof of Theorem~\ref{DN} (solution of Dobrushin's problem in the strictly positive case).\qed
\end{remark}

\begin{remark}
Using the relation~\eqref{Delta_V-Delta_t}, it is not difficult to obtain the (already mentioned in Section~\ref{Prelim}) formula expressing the elements of the specification~$\Qb$ by the elements of the 1--specification $\Qb^{(1)}$.  Indeed, for all $V \in \W$, $x,u \in X^V$ and $\wb x \in X^{\mathbb{Z}^d \setminus V}$, we can write
\begin{align*}
q_V^{\wb x} (x) &= \frac{\exp \{\te_V^{\wb x} (x,u)\}}{\sum \limits_{z \in X^V} {\exp \{\te_V^{\wb x} (z,u)\} }} \\[7pt]
&= \frac{\exp \{\te_{t_1}^{\wb x x_2 x_3\cdots x_n } ( x_1, u_1 )\} \exp\{ \te_{t_2}^{\wb x u_1 x_3 \cdots x_n} ( x_2, u_2 )\} \cdots \exp\{ \te_{t_n}^{\wb x u_1 u_2 \cdots u_{n - 1} } ( x_n, u_n )\}}{\sum \limits_{z \in X^V} {\exp \{ \te_{t_1}^{\wb x z_2 z_3\cdots z_n } ( z_1, u_1 )\} \exp \{ \te_{t_2}^{\wb x u_1 z_3 \cdots z_n} ( z_2, u_2 )\} \cdots \exp\{ \te_{t_n}^{\wb x u_1 u_2 \cdots u_{n - 1} } ( z_n, u_n ) \} }} \\[7pt]
&= \frac{\displaystyle\frac{q_{t_1}^{\wb x x_2 x_3\cdots x_n } (x_1)}{q_{t_1}^{\wb x x_2 x_3\cdots x_n } (u_1)}\, \frac{q_{t_2}^{\wb x u_1 x_3 \cdots x_n} (x_2)}{q_{t_2}^{\wb x u_1 x_3 \cdots x_n} (u_2)} \cdots \dfrac{q_{t_n}^{\wb x u_1 u_2 \cdots u_{n - 1} } (x_n)}{q_{t_n}^{\wb x u_1 u_2 \cdots u_{n - 1} } (u_n)} }{ \displaystyle\sum \limits_{z \in X^V} \frac{q_{t_1}^{\wb x z_2 z_3\cdots z_n } (z_1)}{q_{t_1}^{\wb x z_2 z_3\cdots z_n } (u_1)} \, \frac{q_{t_2}^{\wb x u_1 z_3 \cdots z_n} (z_2)}{q_{t_2}^{\wb x u_1 z_3 \cdots z_n} (u_2)} \cdots \frac{q_{t_n}^{\wb x u_1 u_2 \cdots u_{n - 1} } (z_n)}{q_{t_n}^{\wb x u_1 u_2 \cdots u_{n - 1} } (u_n)} },
\end{align*}
where $t_1, t_2,\ldots,t_n$ is some enumeration of the elements of the set $V$, and $x_i = x_{t_i}$, $u_i = u_{t_i}$, $z_i = z_{t_i}$ for~$i = 1,2,\ldots,n$.  By virtue of the properties of the elements of $\Te^{(1)}$, this formula does not depend on either the choice of the configuration $u \in X^V$ or the order of enumeration of the elements of~$V$.\qed
\end{remark}

\section{Potential energy}
\label{PE}

Now, basing ourselves on the notion of (one-point) transition energy, we can give a proper mathematical definition of the (one-point) Hamiltonian without involving the notion of potential.

\subsection{One-point Hamiltonian}

Let $\Te^{(1)} = \bigl\{ \te_t^{\wb x},\ t \in \mathbb{Z}^d,\, \wb x \in X^{\mathbb{Z}^d \setminus t} \bigr\}$ be a one-point transition energy field.  Since the elements of~$\Te^{(1)}$ are transition energies in lattice points, they can be represented in the form
\begin{equation}
\label{TEfromPE}
\te_t^{\wb x} (x,u) = \hh_t^{\wb x} (u) - \hh_t^{\wb x} (x), \quad x,u \in X^t,
\end{equation}
where $\hh_t^{\wb x}$ can be interpreted as potential energy in point $t$ under boundary condition~$\wb x$.  Then, the conditions~\eqref{DeltaT2} of consistency of the elements of $\Te^{(1)}$ will be reformulated in the following form: for all $t, s \in {\mathbb{Z}^d}$, $x,u \in X^{t}$ and $y,v \in X^{s}$, it holds
\[
\hh_t^{\wb x y} (u) - \hh_t^{\wb x y} (x) + \hh_s^{\wb x u} (v) - \hh_s^{\wb x u} (y) = \hh_s^{\wb x x} (v) - \hh_s^{\wb x x} (y) + \hh_t^{\wb x v} (u) - \hh_t^{\wb x v} (x),
\]
or, equivalently,
\begin{equation}
\label{PEeq}
\hh_t^{\wb x y} (x) + \hh_s^{\wb x x} (v) + \hh_t^{\wb x v} (u) + \hh_s^{\wb x u} (y) = \hh_s^{\wb x x} (y) + \hh_t^{\wb x y} (u) + \hh_s^{\wb x u} (v) + \hh_t^{\wb x v} (x).
\end{equation}

The converse is also true.  If for a family $\HH^{(1)} = \bigl\{\hh_t^{\wb x},\ t \in \mathbb{Z}^d,\, \wb x \in X^{\mathbb{Z}^d \setminus t} \bigr\}$ of functions in lattice points with infinite boundary conditions, the relation~\eqref{PEeq} holds, the family \hbox{$\Te^{(1)} = \bigl\{ \te_t^{\wb x},\ t \in \mathbb{Z}^d,\, \wb x \in X^{\mathbb{Z}^d \setminus t} \bigr\}$} of functions defined by~\eqref{TEfromPE} is a one-point transition energy field.

These consideration allow us to define the one-point potential energy without using the notion of interaction potential in the following way.

A family $\HH^{(1)} = \bigl\{\hh_t^{\wb x},\ t \in \mathbb{Z}^d,\, \wb x \in X^{\mathbb{Z}^d \setminus t} \bigr\}$ of functions in lattice points with infinite boundary conditions will be called \emph{one-point potential energy} (\emph{one-point Hamiltonian}) if its elements are consistent in the following sense: for all $t, s \in {\mathbb{Z}^d}$, $x,u \in X^{t}$, $y,v \in X^{s}$ and $\wb x \in X^{\mathbb{Z}^d \setminus \{t,s\}}$, the relation~\eqref{PEeq} holds.

Let us note that to each one-point transition energy field $\Te^{(1)}$ correspond a lot of one-point potential energies $\HH^{(1)}$, because each function~$\hh_t^{\wb x}$ is defined up to an additive constant $c_t^{\wb x}$ corresponding to the choice of the zero level of potential energy in the point $t$ under the boundary condition~$\wb x$.  In particular, it is clear that to a quasilocal one-point transition energy field correspond both quasilocal and non-quasilocal one-point potential energies.  On the other hand, the quasilocality of a one-point potential energy ensures the quasilocality of the (unique) corresponding one-point transition energy field.

It is not difficult to see that if $\bPhi$ is a potential, the corresponding one-point Hamiltonian~$\HH_\bPhi^{(1)}$ satisfies the consistency conditions~\eqref{PEeq} and, therefore, is a one-point potential energy.  This justifies the widespread use of sums of interaction potentials as Hamiltonians.

Note also that if $\Qb^{(1)} = \bigl\{ q_t^{\wb x},\ t \in \mathbb{Z}^d,\, \wb x \in X^{\mathbb{Z}^d \setminus t} \bigr\}$ is a strictly positive 1--specification, and if some arbitrary configuration $u^\circ = u^\circ (t,\wb x) \in X^t$ is fixed for each $t \in\mathbb{Z}^d$ and $\wb x \in X^{\mathbb{Z}^d \setminus t}$, the family~$\HH^{(1)} = \bigl\{\hh_t^{\wb x},\ t \in \mathbb{Z}^d,\, \wb x \in X^{\mathbb{Z}^d \setminus t} \bigr\}$ of functions
\[
\hh_t^{\wb x}(x) = -\ln \frac{q_t^{\wb x} (x)}{q_t^{\wb x} (u^\circ)}, \quad x \in X^{t},
\]
is a one-point potential energy.

The following result is an immediate consequence of Theorem~\ref{GibbsForm1-spec}.

\begin{Thm}
\label{J1}
A family $\Qb^{(1)} = \bigl\{ q_t^{\wb x},\ t \in \mathbb{Z}^d,\, \wb x \in X^{\mathbb{Z}^d \setminus t} \bigr\}$ of functions in lattice points with infinite boundary conditions will be a strictly positive 1--specification if and only if its elements have the Gibbsian form
\[
q_t^{\wb x} (x) = \frac{\exp \{ - \hh_t^{\wb x} (x)\}}{\sum \limits_{z \in X^t} {\exp \{ - \hh_t^{\wb x} (z)\} }}, \quad x \in X^t,
\]
where $\HH^{(1)} = \bigl\{\hh_t^{\wb x},\ t \in \mathbb{Z}^d,\, \wb x \in X^{\mathbb{Z}^d \setminus t} \bigr\}$ is some one-point potential energy.
\end{Thm}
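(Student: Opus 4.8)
The plan is to derive the statement directly from Theorem~\ref{GibbsForm1-spec} by exploiting the elementary correspondence~\eqref{TEfromPE} between one-point transition energy fields and one-point potential energies. Everything hinges on a single observation: if $\HH^{(1)}$ is any family and $\Te^{(1)}$ is defined from it by~\eqref{TEfromPE}, then the two Gibbsian forms coincide. Indeed, since $\te_t^{\wb x}(x,u) = \hh_t^{\wb x}(u) - \hh_t^{\wb x}(x)$, the factor $\exp\{\hh_t^{\wb x}(u)\}$ occurs in both numerator and denominator and cancels, giving
\[
\frac{\exp\{\te_t^{\wb x}(x,u)\}}{\sum_{z \in X^t} \exp\{\te_t^{\wb x}(z,u)\}} = \frac{\exp\{-\hh_t^{\wb x}(x)\}}{\sum_{z \in X^t} \exp\{-\hh_t^{\wb x}(z)\}}, \quad x \in X^t.
\]
Thus the Gibbsian representation in terms of a one-point potential energy is literally identical to the Gibbsian representation in terms of the associated one-point transition energy field.

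For the necessity, I would start from a strictly positive 1--specification $\Qb^{(1)}$. By Theorem~\ref{GibbsForm1-spec}, its elements have the Gibbsian form with some one-point transition energy field $\Te^{(1)}$. Representing each $\te_t^{\wb x}$ in the form~\eqref{TEfromPE} — for instance by fixing a reference configuration and setting $\hh_t^{\wb x}(x) = -\te_t^{\wb x}(x,u^\circ)$ — produces a family $\HH^{(1)}$ which, by the equivalence of the consistency conditions~\eqref{DeltaT2} and~\eqref{PEeq} recorded in the discussion preceding the theorem, is a one-point potential energy. The cancellation above then turns the Gibbsian form with $\te_t^{\wb x}$ into the required Gibbsian form with $-\hh_t^{\wb x}$.

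For the sufficiency, I would run the argument in reverse: given that the elements of $\Qb^{(1)}$ have the Gibbsian form with some one-point potential energy $\HH^{(1)}$, I define $\Te^{(1)}$ by~\eqref{TEfromPE}. The relation~\eqref{PEeq} for $\HH^{(1)}$ is precisely equivalent to the relation~\eqref{DeltaT2} for $\Te^{(1)}$, so $\Te^{(1)}$ is a one-point transition energy field, and the same cancellation shows that $\Qb^{(1)}$ has the Gibbsian form with $\Te^{(1)}$. Theorem~\ref{GibbsForm1-spec} then yields that $\Qb^{(1)}$ is a strictly positive 1--specification.

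I do not anticipate any genuine obstacle: the result is, as announced, an immediate consequence of Theorem~\ref{GibbsForm1-spec}, and its only substantive content — the equivalence between~\eqref{DeltaT2} and~\eqref{PEeq} — has already been established. The sole point meriting a word of care is the non-uniqueness in~\eqref{TEfromPE}: unlike the one-point transition energy field, the one-point potential energy is determined only up to additive constants $c_t^{\wb x}$, which is exactly why the theorem asserts the existence of \emph{some} $\HH^{(1)}$ rather than a unique one. This is harmless, since those additive constants cancel in the Gibbs formula.
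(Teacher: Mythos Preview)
Your proposal is correct and follows exactly the route the paper intends: the paper states Theorem~\ref{J1} as ``an immediate consequence of Theorem~\ref{GibbsForm1-spec},'' and what you have written is precisely the unpacking of that consequence via the correspondence~\eqref{TEfromPE} and the already-established equivalence of~\eqref{DeltaT2} and~\eqref{PEeq}. There is nothing to add.
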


In fact, this theorem establishes a representation of a strictly positive 1--specification $\Qb^{(1)}$ in terms of a one-point potential energy~$\HH^{(1)}$.  If, in addition, the one-point potential energy $\HH^{(1)}$ is quasilocal, the 1--specification~$\Qb^{(1)}$ is also quasilocal and, therefore, is Gibbsian.

\begin{remark}
In classical considerations, the properties of the Hamiltonian (corresponding to a given potential) are determined by the properties of the potential.  Among such properties are, for example, the quasilocality, the finite range interaction property, the vacuumness, various symmetries, and so on.  In the framework of our approach, the required properties follow from the corresponding properties of a one-point Hamiltonian (defined by the relation~\eqref{PEeq} without using the notion of potential), while the latters are directly postulated.\qed
\end{remark}

\begin{remark}
Let us note that it is much easier to define the one-point Hamiltonian without using the notion of potential in the continuum case, namely, within the framework of the theory of Gibbs point processes.  Perhaps the first such definition can be found in the unpublished work~\cite{Led1976} of Ledrappier (see also a more recent paper~\cite{NZ1979} of Nguyen and Zessin).  The consistency conditions proposed by Ledrappier in~\cite{Led1976} have the following form:
\[
V_t(\omega)\,V_s(\omega\cup t)=V_s(\omega)\,V_t(\omega\cup s),
\]
where $\omega$ is a configuration (locally-finite subset of $\mathbb{R}^d$), $t,s\in\mathbb{R}^d\setminus\omega$, and $\ln V_t(\omega)$ is interpreted as the energy needed to add the point $t$ to the configuration $\omega$, that is, to move the system from the state $\omega$ to the state $\omega\cup t$.  So, the form and the physical meaning of these consistency conditions are more similar to those of our conditions~\eqref{DeltaT2} of consistency of the elements of the one-point transition energy field, than to those of our conditions~\eqref{PEeq} of consistency of the elements of the one-point potential energy.\qed
\end{remark}

\subsection{Hamiltonian}

Basing ourselves on the definition of the transition energy field, similarly to the one-point case, we can define the \emph{potential energy} (\emph{Hamiltonian}) as a family $\HH = \bigl\{\hh_V^{\wb x},\ V \in \W,\, \wb x \in X^{\mathbb{Z}^d \setminus V} \bigr\}$ of functions satisfying the following consistency conditions: for all $V,I \in \W$ such that $V \cap I = \Memptyset$ and all $x,u \in X^V$, $y \in X^I$ and $\wb x\in X^{\mathbb{Z}^d\setminus (V\cup I)}$, it holds
\begin{equation}
\label{DL-HAM}
\hh_{V \cup I}^{\wb x} (x y) + \hh_V^{\wb x y} (u) = \hh_{V \cup I}^{\wb x} (u y) + \hh_V^{\wb x y} (x). 
\end{equation}
These conditions are based on the relation~\eqref{Delta2} and, as the latter is tantamount to~\eqref{Delta2.2}, are also equivalent to the following consistency conditions: for all $V,I \in \W$ such that $V \cap I = \Memptyset$ and all~$x,u \in X^V$, $y,v \in X^I$ and $\wb x\in X^{\mathbb{Z}^d\setminus (V\cup I)}$, it holds
\[
\hh_{V \cup I}^{\wb x} (x y) + \hh_V^{\wb x y} (u) + \hh_I^{\wb x u} (v) = \hh_{V \cup I}^{\wb x} (u v) + \hh_V^{\wb x y} (x) + \hh_I^{\wb x u} (y).
\]

It is not difficult to see that if $\bPhi$ is a potential, the corresponding Hamiltonian~$\HH_\bPhi$ is a potential energy.

Note also that if $\Qb = \bigl\{ q_V^{\wb x},\ V \in \W,\, \wb x \in X^{\mathbb{Z}^d \setminus V} \bigr\}$ is a strictly positive specification, and if some arbitrary configuration $u^\circ = u^\circ (V,\wb x) \in X^V$ is fixed for each $V \in \W$ and $\wb x \in X^{\mathbb{Z}^d \setminus V}$, the family~$\HH = \bigl\{ \hh_V^{\wb x},\ V \in \W,\, \wb x \in X^{\mathbb{Z}^d \setminus V} \bigr\}$ of functions
\begin{equation}
\label{H_V_ln}
\hh_V^{\wb x} (x) = -\ln \frac{q_V^{\wb x} (x)}{q_V^{\wb x} (u^\circ)}, \quad x \in X^V,
\end{equation}
is a potential energy.

The following result is an immediate consequence of Theorem~\ref{GibbsFormSpec}.

\begin{Thm}
\label{J2}
A family $\Qb = \bigl\{ q_V^{\wb x},\ V \in \W,\, \wb x \in X^{\mathbb{Z}^d \setminus V} \bigr\}$ of functions in finite volumes with infinite boundary conditions will be a strictly positive specification if and only if its elements have the Gibbsian form
\[
q_V^{\wb x} (x) = \frac{\exp \{ - \hh_V^{\wb x} (x)\}}{\sum \limits_{z \in X^V} {\exp \{ - \hh_V^{\wb x} (z)\} }}, \quad x \in X^V,
\]
where $\HH = \bigl\{ \hh_V^{\wb x},\ V \in \W,\, \wb x \in X^{\mathbb{Z}^d \setminus V} \bigr\}$ is some potential energy.
\end{Thm}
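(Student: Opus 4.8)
The plan is to reduce the statement to Theorem~\ref{GibbsFormSpec} by exploiting the correspondence between transition energy fields and potential energies encoded in the relation $\te_V^{\wb x}(x,u) = \hh_V^{\wb x}(u) - \hh_V^{\wb x}(x)$. The guiding observation is that this is precisely the general solution of the functional equation~\eqref{Delta_sum} in each finite volume, so that a family $\Te$ built from a family $\HH$ in this way automatically has transition energies as its elements; the only thing left to match is the consistency. I expect to show that the potential-energy consistency conditions~\eqref{DL-HAM} are nothing but the transition-energy-field consistency conditions~\eqref{Delta2} rewritten, the passage between the two being a one-line algebraic identity.

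For the sufficiency I would start from the Gibbsian form with a given potential energy $\HH$ and set $\te_V^{\wb x}(x,u) = \hh_V^{\wb x}(u) - \hh_V^{\wb x}(x)$. Each $\te_V^{\wb x}$ is then a transition energy in volume $V$, and I would verify that $\Te = \bigl\{\te_V^{\wb x}\bigr\}$ satisfies~\eqref{Delta2}: writing $\te_{V\cup I}^{\wb x}(xy,uy) = \hh_{V\cup I}^{\wb x}(uy) - \hh_{V\cup I}^{\wb x}(xy)$ and $\te_V^{\wb x y}(x,u) = \hh_V^{\wb x y}(u) - \hh_V^{\wb x y}(x)$, the required equality is exactly~\eqref{DL-HAM}. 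Hence $\Te$ is a transition energy field. Substituting the relation into the Gibbsian form of Theorem~\ref{GibbsFormSpec}, the common factor $\exp\{\hh_V^{\wb x}(u)\}$ cancels between numerator and denominator, so the given $\Qb$ coincides with the Gibbsian specification attached to $\Te$, and Theorem~\ref{GibbsFormSpec} yields that $\Qb$ is a strictly positive specification.

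For the necessity I would invoke Theorem~\ref{GibbsFormSpec} to obtain, from a strictly positive specification $\Qb$, the uniquely determined transition energy field with $\te_V^{\wb x}(x,u) = \ln\bigl(q_V^{\wb x}(x)/q_V^{\wb x}(u)\bigr)$. Fixing an arbitrary reference configuration $u^\circ = u^\circ(V,\wb x)$ in each volume, I would set $\hh_V^{\wb x}(x) = -\te_V^{\wb x}(x,u^\circ)$, which is precisely the family~\eqref{H_V_ln} already observed to be a potential energy. Plugging this $\HH$ into the required Gibbsian form and using that $q_V^{\wb x}$ sums to $1$, the factor $q_V^{\wb x}(u^\circ)$ cancels and one recovers $q_V^{\wb x}(x)$, completing the representation.

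The point to get right is bookkeeping rather than a genuine obstacle: one must confirm that~\eqref{DL-HAM} and~\eqref{Delta2} are literally the same relation under the substitution above, and that the two exponential normalisations (with $\Te$ and with $\HH$) agree. Since both verifications amount to cancelling an additive constant, equivalently a multiplicative exponential factor, there is no hard step, which is why the result is an immediate consequence of Theorem~\ref{GibbsFormSpec}. The only subtlety worth flagging is the non-uniqueness of $\HH$: unlike $\Te$, the potential energy is pinned down only up to the additive constants $c_V^{\wb x}$, so the statement correctly asserts the existence of \emph{some} potential energy rather than a unique one.
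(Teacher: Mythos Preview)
Your proposal is correct and matches the paper's approach: the paper gives no separate proof but states that the result is an immediate consequence of Theorem~\ref{GibbsFormSpec}, and the surrounding discussion makes explicit exactly the correspondence you describe (that the consistency conditions~\eqref{DL-HAM} are~\eqref{Delta2} rewritten under $\te_V^{\wb x}(x,u) = \hh_V^{\wb x}(u) - \hh_V^{\wb x}(x)$, and that the family~\eqref{H_V_ln} furnishes a potential energy for the necessity direction). Your write-up simply spells out these details, including the correct observation about non-uniqueness of $\HH$.
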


In fact, this theorem establishes a representation of a strictly positive specification $\Qb$ in terms of a potential energy~$\HH$.  If, in addition, the potential energy $\HH$ is quasilocal, the specification~$\Qb$ is also quasilocal and, therefore, is Gibbsian.

\begin{remark}
There are many works (see, for example Avertintsev~\cite{Averintsev}, Kozlov~\cite{Kozlov}, Sullivan~\cite{Sullivan-FR,Sullivan}) devoted to the problem of finding conditions under which a specification is Gibbsian (with a potential belonging to some given class).  In all these works, the problem was essentially reduced to proving that the system~$\HH = \bigl\{ \hh_V^{\wb x},\ V \in \W,\, \wb x \in X^{\mathbb{Z}^d \setminus V} \bigr\}$ of function defined by~\eqref{H_V_ln} is a Hamiltonian.  In the absence of a general definition of the Hamiltonian, much effort has gone into proving (mainly using the Mobius inversion formula) the existence of a potential (usually having a complicated form) generating $\HH$.  In the framework of our approach, such considerations are no longer necessary.\qed
\end{remark}

\begin{remark}
We would like to mention that Sullivan introduced in~\cite{Sullivan} the notion of \emph{conditional energy field} as the system of functions~\eqref{H_V_ln} with some particular choice of configurations $u^\circ$ (namely, $u^\circ_t=\theta$ for all~$t\in V$, where $\theta$ is some fixed element of $X$).  This terminology has strongly influenced the choice of the terms \emph{transition energy field} and \emph{one-point transition energy field} introduced in the present work.\qed
\end{remark}

\begin{remark}
Let us note that (as in the one-point case) it is possible to define the Hamiltonian without using the notion of potential also within the framework of the theory of Gibbs point processes. In particular, consistency conditions somewhat similar to our consistency conditions~\eqref{DL-HAM} can be found in the work of Dereudre and Lavancier~\cite{DL2009}.\qed
\end{remark}

In conclusion, we would like to emphasize that Theorems~\ref{J2} and~\ref{J1} show that the representation in Gibbsian form is a necessary and sufficient condition that a family of functions in finite volumes (resp.\ in lattice points) with infinite boundary conditions be a strictly positive specification (resp.\ 1--specification).  This result can, in our opinion, be considered as a justification of the Gibbs formula in the infinite volume case. Moreover, its necessity part answers the long-standing question of how wide the class of specifications, which can be represented in Gibbsian form, is.  This problem was formulated by D.\ Ruelle in the appendix ``Open Problems'' of his book~\cite{R2}.  Our answer is that any strictly positive specification admits Gibbsian representation (with some Hamiltonian).

\section{Towards a general theory of Gibbs random fields based on their energetic description}
\label{GenTheory}

There are many classes of random processes considered in the probability theory.  Usually, the processes belonging to a particular class are characterized by some defining properties of their finite-dimensional or conditional distributions.  Further, the general statements of the corresponding theory are established on the base of these properties, while the applications usually go through some representation theorems expressing the processes in terms of some simple and convenient objects.

The situation is quite different for the class of Gibbs random fields. Historically, instead of being characterized by some properties of their finite-dimensional or conditional distributions, Gibbs random fields have been directly defined by the representation of their conditional distributions in terms of potentials, and only afterwards, the problem of internal characterization of Gibbs random fields was considered.

An introduction to a general theory of Gibbs random fields (which does not use in its initial part the notion of potential) was already presented in the work~\cite{DN2009} of the authors (see also Nahapetian and Khachatryan~\cite{NKh}).  The exposition of the theory was based on the description of random fields by means of 1--specifications.  The notion of the Gibbs random fields was introduced (without using the notion of potential) by means of the following definition: a random field $\PB$ is called \emph{Gibbs random field} if it is strictly positive, the limits
\begin{equation}
\label{CanonicGibbs}
\qb_t^{\wb x} (x) = \lim_{\Lambda \uparrow \mathbb{Z}^d \setminus t} \frac{ \pb_{ t \cup \Lambda } ( x \wb x_\Lambda ) }{ \pb_\Lambda ( \wb x_\Lambda )},\quad x \in X^{t},
\end{equation}
exist for all $t \in \mathbb{Z}^d$ and $\wb x \in X^{\mathbb{Z}^d \setminus t}$, are strictly positive, and the convergence is uniform with respect to $\wb x$.

In this section, we give an alternative presentation of some fragments of the aforementioned theory, basing ourselves on the description of random fields by means of one-point transition energy fields.

Taking into account the considerations of the present work, the above given definition of the Gibbs random field is clearly equivalent to the following one: a random field $\PB$ is called \emph{Gibbs random field} (we also say that the random field~$\PB$ is \emph{Gibbsian}) if it is strictly positive, the limits
\begin{equation}
\label{CanonicDelta}
\tec_t^{\wb x} (x,u) = \lim_{\Lambda \uparrow \mathbb{Z}^d \setminus t} \ln\frac{\pb_{t\cup\Lambda}(x \wb x_\Lambda)}{\pb_{t\cup\Lambda}(u \wb x_\Lambda)}, \quad x,u \in X^t,
\end{equation}
exist for all $t \in \mathbb{Z}^d$ and $\wb x \in X^{\mathbb{Z}^d \setminus t}$, and the convergence is uniform with respect to $\wb x$.

It is easy to see that for a Gibbs random field~$\PB$, the limits~\eqref{CanonicGibbs} form a Gibbsian 1--specification which will be denoted $\QB^{(1)}$ and called \emph{canonical 1--specification} (or \emph{canonical version of one-point conditional distribution}) of the random field~$\PB$, while the limits~\eqref{CanonicDelta} form a quasilocal one-point transition energy field which will be denoted $\Tec^{(1)}$ and called \emph{canonical one-point transition energy field} of the random field~$\PB$.

Also, it is not difficult to show that for a Gibbs random field~$\PB$, the multi-point analogues of the limits~\eqref{CanonicGibbs} and~\eqref{CanonicDelta} exist and form a Gibbsian specification $\QB$ and a quasilocal transition energy field $\Tec$, respectively.  The specification $\QB$ will be called \emph{canonical specification} (or \emph{canonical version of conditional distribution}) of the random field~$\PB$, while the transition energy field $\Tec$ will be called \emph{canonical transition energy field} of the random field~$\PB$.

Note that the four canonical objects $\Tec^{(1)}$, $\QB^{(1)}$, $\Tec$ and $\QB$ corresponding to a Gibbs random field $\PB$ uniquely determine one another according to the commutative diagram~\eqref{ComDiag}.  So, the assertions presented below concerning any one of them will, of course, admit analogues for the other three.

Let $\Te^{(1)} = \bigl\{ \te_t^{\wb x},\ t \in \mathbb{Z}^d,\, \wb x \in X^{\mathbb{Z}^d \setminus t} \bigr\}$ be a one-point transition energy field.  We will say that a random field $\PB$ is \emph{compatible with (the one-point transition energy field)} $\Te^{(1)}$ if for any $t\in\mathbb{Z}^d$, the limits~\eqref{CanonicDelta} satisfy $\tec_t^{\wb x} = \te_t^{\wb x}$ for almost all (with respect to the measure $\PB$) configurations $\wb x \in X^{\mathbb{Z}^d\setminus t}$.  The definition of the compatibility of a random field with a transition energy field is similar.

As it was already said, the canonical one-point transition energy field $\Tec^{(1)}$ of a Gibbs random field $\PB$ is quasilocal.  The converse is also true: if a random field $\PB$ is compatible with a quasilocal one-point transition energy filed, the random field $\PB$ is Gibbsian (see, for example, the work~\cite{DN2009} of the authors, where the analogues of this assertion in terms of specifications and 1-specifications can be found).

Further, let us note that with the given definition of the Gibbs random field, the question of its existence does not arise, because strictly positive Markov random fields are obviously Gibbsian.  Moreover, it can be shown that the set of Gibbs random fields is a uniform extension of the set of strictly positive Markov random fields.

It should also be noted that not all the strictly positive random fields are Gibbsian.  This can be seen from any one from the following two examples.

\begin{example}
Consider the random field $\PB$ with state space $X = \{ 0,1 \}$ such that for any $V \in \W$, the finite-dimensional distribution of $\PB$ in the volume $V$ is given by
\[
\pb_V (x) = \frac{1}{(\bil| V \bir| + 1) C_{\bil| V \bir|}^{\bil| x \bir|}}, \quad x \in X^V.
\]

For all $t \in \mathbb{Z}^d$, $\Lambda \in \W(\mathbb{Z}^d \setminus t)$ and $\wb x \in X^{\mathbb{Z}^d \setminus t}$, we have
\[
\frac{\pb_{ t \cup \Lambda} (1_t \wb x_\Lambda)}{ \pb_{t \cup \Lambda} (0_t\wb x_\Lambda)} = \frac{C_{\bil| \Lambda \bir| + 1}^{\bil| \wb x_\Lambda \bir|}}{C_{\bil| \Lambda \bir| + 1}^{\bil| \wb x_\Lambda \bir| + 1}} = \frac{\bil|\wb x_\Lambda \bir| + 1}{\bil| \Lambda \bir| - \bil|\wb x_\Lambda \bir| + 1}.
\]
Denote $U_t^p$, where $t \in \mathbb{Z}^d$ and $p \in [ 0,1 ]$, the set of all configurations $\wb x \in X^{\mathbb{Z}^d \setminus t}$ such that the limit~$\lim \limits_{\Lambda \uparrow \mathbb{Z}^d \setminus t} \frac{\bil|\wb x_\Lambda \bir|}{\bil| \Lambda \bir|}$ exists and is equal to $p$.  Clearly, if $\wb x \in U_t^p$ with some $p\in(0,1)$, the limits~\eqref{CanonicDelta} exist and are given by~$\tec_t^{\wb x} (1_t,0_t) = -\tec_t^{\wb x} (0_t,1_t) = \ln\frac{p}{1-p}$ $\bigl($and $\tec_t^{\wb x} (1_t,1_t) = \tec_t^{\wb x} (0_t,0_t) = 0\bigr)$.  On the contrary, for all~$\wb x \in X^{\mathbb{Z}^d \setminus t} \Bigsetminus \bigcup \limits_{p \in (0,1)} U_t^p$, the limits~\eqref{CanonicDelta} do not exist (for $x\ne u$), and so the random field~$\PB$ is not Gibbsian.\qed
\end{example}

\begin{example}
Let $\alpha \in (0,1)$, and let $\PB^{(p_1)}$ and $\PB^{(p_2)}$ be the Bernoulli random fields with parameters~$p_1$ and $p_2$, respectively, where $p_1,p_2\in(0,1)$ are such that $p_1 \neq p_2$.  Let us show that the convex combination (mixture) $\PB=\alpha \PB^{(p_1)} + (1-\alpha) \PB^{(p_2)}$ of $\PB^{(p_1)}$ and $\PB^{(p_2)}$ is not Gibbsian.

For all $t \in \mathbb{Z}^d$, $\Lambda \in \W (\mathbb{Z}^d \setminus t )$ and $\wb x \in X^{\mathbb{Z}^d \setminus t}$, we can write
\[
\frac{\pb_{t \cup \Lambda} (1_t \wb x_\Lambda)}{ \pb_{t \cup \Lambda} (0_t \wb x_\Lambda)} = \frac{ \alpha \, p_1 + \wb \alpha \, p_2 \exp \{ \bil| \Lambda \bir| f_\Lambda (\wb x_\Lambda) \} }{ \alpha \, \wb p_1 + \wb \alpha \, \wb p_2 \exp \{ \bil| \Lambda \bir| f_\Lambda (\wb x_\Lambda) \} },
\]
where $\wb\alpha=1-\alpha$, $\wb p_1=1-p_1$, $\wb p_2=1-p_2$ and
\[
f_\Lambda (\wb x_\Lambda) = \frac{\bil| \wb x_\Lambda \bir|}{\bil| \Lambda \bir|} \ln \frac{p_2}{p_1} + \biggl( 1 - \frac{\bil| \wb x_\Lambda \bir|}{\bil| \Lambda \bir|} \biggr) \ln \frac{\wb p_2}{\wb p_1}.
\]
It is not difficult to see that there exist a configuration $\wb y^\circ$ such that
\[
\liminf_{\Lambda \uparrow \mathbb{Z}^d \setminus t} \frac{\bil| \wb y^\circ_\Lambda \bir| }{\bil| \Lambda \bir| } =0 \quad \text{and} \quad
\limsup_{\Lambda \uparrow \mathbb{Z}^d \setminus t} \frac{\bil| \wb y^\circ_\Lambda \bir| }{\bil| \Lambda \bir| } =1.
\]
Since the quantities $\ln \dfrac{p_2}{p_1}$ and $\ln \dfrac{\wb p_2}{\wb p_1}$ are of opposite signs, we have
\[
\liminf_{\Lambda \uparrow \mathbb{Z}^d \setminus t} \bil| \Lambda \bir| f_\Lambda (\wb y^\circ_\Lambda) = - \infty \quad \text{and} \quad
\limsup_{\Lambda \uparrow \mathbb{Z}^d \setminus t} \bil| \Lambda \bir| f_\Lambda (\wb y^\circ_\Lambda) = + \infty.
\]
Thus, the limits~\eqref{CanonicDelta} do not exist for $\wb x = \wb y^\circ$ (and $x\ne u$), and so the random field $\PB$ is not Gibbsian.\qed
\end{example}

Note that there are a lot of other examples of random fields that are not Gibbsian (see, for example, van Enter, Fern\'andez and Sokal~\cite{EFS}).  So, the set of Gibbs random fields is a proper subset of the set of strictly positive random fields.  At the same time, it is not difficult to prove the following statement.

\begin{Prop}
The set of Gibbs random fields is dense in the set of strictly positive random fields with respect to the topology of weak convergence.
\end{Prop}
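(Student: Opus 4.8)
The plan is to exploit the compactness of the configuration space. Since $X$ is finite, $X^{\mathbb{Z}^d}$ is compact and metrizable and every continuous function on it is a uniform limit of local functions; consequently weak convergence of random fields is equivalent to convergence of all finite-dimensional distributions. In particular, for a strictly positive random field $\PB$ any weak neighborhood of $\PB$ is controlled by the finite-dimensional distributions on a single (sufficiently large) finite volume, obtained as the union of the finitely many volumes involved. It therefore suffices to show that for every $W\in\W$ and every $\varepsilon>0$ there exists a \emph{Gibbs} random field $\PB'$ with $\bil|\pb'_W(x)-\pb_W(x)\bir|<\varepsilon$ for all $x\in X^W$; in fact I will produce one that reproduces $\pb_W$ exactly.

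First I would fix a box $B\in\W$ with $W\subset B$ and tile $\mathbb{Z}^d$ by the disjoint translates $\{B+k\}$ of $B$, one tile being $B$ itself. I then define $\PB_B$ to be the \emph{block-independent} random field obtained by sampling independently in distinct tiles, the configuration in the tile $B+k$ being distributed according to the marginal $\pb_{B+k}$ of $\PB$. Its finite-dimensional distributions factorize over the tiles and are mutually consistent, so by Kolmogorov's theorem $\PB_B$ is a well-defined random field; moreover each factor is a marginal of a strictly positive distribution and is therefore strictly positive, whence $\PB_B$ is strictly positive.

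Next I would check that $\PB_B$ is a Markov random field. Taking as neighborhood system $\partial t=(\text{tile of }t)\setminus\{t\}$, which is finite, symmetric and satisfies $t\notin\partial t$, the factorization over tiles shows that for any finite $V\subset\mathbb{Z}^d\setminus t$ with $V\supset\partial t$ the conditional distribution $\qb_t^{\wb x_V}$ depends on $\wb x_V$ only through its restriction to $V\cap(\text{tile of }t)=\partial t$, i.e.\ $\qb_t^{\wb x_V}=\qb_t^{\wb x_{\partial t}}$. Since $\PB_B$ is a strictly positive Markov random field, it is Gibbsian, as already noted in this section.

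Finally, because $W$ is contained in the single tile $B$, the finite-dimensional distribution of $\PB_B$ in $W$ equals $(\pb_B)_W=\pb_W$ by the consistency of the marginals of $\PB$. Hence $\PB_B$ lies in the prescribed neighborhood, and the set of Gibbs random fields is dense in the set of strictly positive random fields. The only points requiring care are the reduction of an arbitrary weak neighborhood to a single finite volume and the verification of the Markov property of the block-independent field; both are routine once the tiling construction is in place, so I do not anticipate a substantial obstacle.
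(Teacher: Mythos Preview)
The paper does not actually supply a proof of this proposition; it merely asserts that it ``is not difficult to prove'' and moves on. So there is nothing to compare your argument against.

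Your argument is sound. The reduction of an arbitrary weak neighborhood to control of a single finite-volume marginal is standard on the compact space $X^{\mathbb{Z}^d}$, and the block-independent construction does what you claim: choosing $B$ to be a rectangular box containing $W$ ensures that the translates tile $\mathbb{Z}^d$, the product of strictly positive marginals is strictly positive, and the factorization over tiles gives the Markov property with neighborhood system $\partial t=(\text{tile of }t)\setminus\{t\}$. Since the paper itself observes that strictly positive Markov random fields are Gibbsian, and since the $W$-marginal of $\PB_B$ coincides with $\pb_W$ by construction, you land exactly on the target. No substantial gap.
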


Note also that, as shows the following example (see also Georgi~\cite{Georg}), there may exist different Gibbs random fields having the same canonical one-point transition energy field $\Tec^{(1)}$ (and hence the same $\QB^{(1)}$, $\Tec$ and $\QB$).

\begin{example}
Let $\mathbb{N} = \{1,2,3,\ldots\}$, and let the numbers $c_j$, $j \in \mathbb{N}$, be such that $0 < c_j < 1$ and~$\prod \limits_{j=1}^{\infty} c_j >0$.  For $t \in \mathbb{N}$, we put $k_t = \prod \limits_{j=t}^\infty c_j$.  Let $\PB^{+}$ and $\PB^{-}$ be the Markov chains on $\mathbb{N}$ with the same state space~$X = \{ -1,1 \}$, whose initial distributions and transition probabilities are given~by
\[
p_1^{\pm} (x_1) = \frac{1 \pm x_1 k_1}{2} \quad\text{and}\quad
p_t^{\pm} (x_{t-1}, x_t) = \frac{1 + c_{t-1} x_{t-1} x_t}{2} \, \frac{1 \pm x_t k_t}{1 \pm x_{t-1} k_{t-1}},
\]
where $t \in \mathbb{N}$ and $x_t \in X^t$.  Let us show that the random fields $\PB^{+}$ and $\PB^{-}$ have the same canonical transition energy field.

First, let us note that for all $n \in \mathbb{N}$, the finite-dimensional distributions $\pb_{\{1,2,\ldots,n\}}^+$ and~$\pb_{\{1,2,\ldots,n\}}^-$ of the random fields $\PB^{+}$ and $\PB^{-}$ have the form
\[
\pb_{\{1,2,\ldots,n\}}^\pm (x_1 x_2 \cdots x_n) = \Biggl( \prod_{j=1}^{n - 1} \frac{1 + c_j x_j x_{j+1} }{2} \Biggr)\, \frac{1 \pm x_n k_n}{2}, \quad x_j \in X^j,\ j = 1,\ldots,n.
\]

Further, let us note that for all $t\in \mathbb{N}\setminus 1$ and for $\Lambda = \{1,2,\ldots,t-1,t+1,\ldots,t+n\}$, $n \in \mathbb{N}$, we have
\begin{equation}
\label{usl_gen}
\frac{\pb_{t \cup \Lambda}^{+} (x_t y)}{\pb_{t \cup \Lambda}^{+} (u_t y) } = \frac{\pb_{t \cup \Lambda}^{-} (x_t y)}{\pb_{t \cup \Lambda}^{-} (u_t y) } = \frac{(1 + c_{t-1} y_{t-1} x_t )(1+ c_t x_t y_{t+1})}{(1 + c_{t-1} y_{t-1} u_t )(1+ c_t u_t y_{t+1})},
\end{equation}
where $x_t,u_t \in X^t$ and $y \in X^\Lambda$.  Indeed, this follows immediately from the relation
\[
\pb_{t \cup \Lambda}^{\pm} (x_t y) = \Biggl(\prod_{j=1}^{t - 2} \frac{ 1 + c_j y_j y_{j+1} }{2} \Biggr) \, \frac{1 + c_{t-1} y_{t-1} x_t}{2} \, \frac{1 + c_t x_t y_{t+1} }{2} \, \Biggl(\prod_{j=t+1}^{t+n-1} \frac{1 + c_j y_j y_{j+1} }{2} \Biggr) \, \frac{1 \pm y_{t+n} k_{t+n}}{2}.
\]

It is not difficult to see that~\eqref{usl_gen} holds also for all $\Lambda\in \W(\mathbb{N}\setminus t)$ containing the bounda\-ry~$\{t-1,t+1\}$ of the point $t$.

For $t = 1$, we similarly obtain
\begin{equation}
\label{usl_gen_t1}
\frac{\pb_{1 \cup \Lambda}^{+} (x_1 y)}{\pb_{1 \cup \Lambda}^{+} (u_1 y) } = \frac{\pb_{1 \cup \Lambda}^{-} (x_1 y)}{\pb_{1 \cup \Lambda}^{-} (u_1 y) } = \frac{1 + c_1 x_1 y_2}{1 + c_1 u_1 y_2}, \quad x_1,u_1 \in X^1,
\end{equation}
for all $\Lambda\in \W(\mathbb{N}\setminus 1)$ such that $2\in \Lambda$ and all $y \in X^\Lambda$.

It follows from~\eqref{usl_gen} and~\eqref{usl_gen_t1} that the limits
\[
\tec_t^{\wb x}(x_t,u_t) = \lim_{\Lambda \uparrow \mathbb{N} \setminus t} \ln\frac{\pb_{t \cup \Lambda}^{\pm} (x_t \wb x_\Lambda)}{\pb_{t \cup \Lambda}^{\pm} (u_t \wb x_\Lambda)} =
\begin{cases}
\ln \dfrac{(1 + c_{t - 1} \wb x_{t -1} x_t )(1+ c_t x_t \wb x_{t+1})}{(1 + c_{t - 1} \wb x_{t -1} u_t )(1+ c_t u_t \wb x_{t+1})}, & \!\text{if }t\in \mathbb{N}\setminus 1,\\[15pt]
\ln \dfrac{1 + c_1 x_1 \wb x_2}{1 + c_1 u_1 \wb x_2}, & \!\text{if }t=1,
\end{cases}
\quad x_t,u_t\in X^t,
\]
exist, and the convergence is uniform with respect to $\wb x \in X^{\mathbb{N} \setminus t}$.  So, the functions $\tec_t^{\wb x}$ form the (common) canonical one-point transition energy field of the random fields~$\PB^{+}$ and $\PB^{-}$.\qed
\end{example}

In the theory of Gibbs random fields, a special attention is payed to the problems of existence and of uniqueness of a random field compatible with a given specification (see, for example, Dobrushin~\cite{D1, D4}, Georgii~\cite{Georg}).  One of the key results in this area (reformulated in terms of one-point transition energy fields) is given by the following theorem (see the paper~\cite{D1} of Dobrushin for the formulation in terms of specifications and the works~\cite{DN1998, DN2001, DN2004, DN2009} of the authors for the formulation in terms of~\hbox{1--specifications}).

\begin{Thm}
\label{ExUniqDobr}
Let $\Te^{(1)} = \bigl\{ \te_t^{\wb x},\ t \in \mathbb{Z}^d,\, \wb x \in X^{\mathbb{Z}^d \setminus t} \bigr\}$ be a one-point transition energy field.  If $\Te^{(1)}$ is quasilocal, there exists a random field~$\PB$ compatible with $\Te^{(1)}$.  Moreover, the random field $\PB$ is Gibbsian and~$\Te^{(1)}$ is its canonical one-point transition energy field.

If, in addition, the Dobrushin uniqueness condition is satisfied, that is,
\[
\sup_{t \in \mathbb{Z}^d}\sum_{s \in \mathbb{Z}^d \setminus t} \sup\ \frac{1}{2} \sum_{x \in X^t} \,\Biggl|\, \frac{\exp \{\te_t^{\wb x} (x,u)\}}{\sum \limits_{z \in X^t} {\exp \{\te_t^{\wb x} (z,u)\} }} - \frac{\exp \{\te_t^{\wb y} (x,u)\}}{\sum \limits_{z \in X^t} {\exp \{\te_t^{\wb y} (z,u)\} }} \,\Biggr|\, < 1,
\]
where $u\in X^t$ is  arbitrary and the second $\sup$ is taken over all pairs of configurations $\wb x,\wb y \in X^{\mathbb{Z}^d \setminus t}$ differing only in the point~$s$, the random field $\PB$ is the unique random field compatible with~$\Te^{(1)}$.
\end{Thm}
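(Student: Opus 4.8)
The plan is to transfer the problem, via the commutative diagram~\eqref{ComDiag}, from the language of one-point transition energy fields into that of specifications, where the classical existence and uniqueness theory of Dobrushin applies directly. First I would use Theorem~\ref{GibbsForm1-spec} to pass from the given one-point transition energy field $\Te^{(1)}$ to the corresponding strictly positive 1--specification $\Qb^{(1)}$, whose elements have the Gibbsian form built from the $\te_t^{\wb x}$. Since $\Te^{(1)}$ is quasilocal, so is $\Qb^{(1)}$, and hence, by Theorem~\ref{DN} together with the remark following it, the uniquely associated specification $\Qb$ is quasilocal as well. Throughout, I would keep in mind that, because the correspondence $\Te^{(1)} \leftrightarrow \Qb^{(1)}$ furnished by Theorem~\ref{GibbsForm1-spec} is bijective and realized pointwise by the Gibbs formula, a random field is compatible with $\Te^{(1)}$ (that is, its canonical limits satisfy $\tec_t^{\wb x} = \te_t^{\wb x}$ almost surely) if and only if $\Qb^{(1)}$ is a version of its one-point conditional distribution, equivalently if and only if it is compatible with $\Qb$.

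For the existence assertion, I would invoke the classical fact recalled in Section~\ref{Prelim} (see Dobrushin~\cite{D1} or Georgii~\cite{Georg}) that a quasilocal specification admits at least one compatible random field $\PB$. By the equivalence of compatibility notions noted above, this $\PB$ is then compatible with $\Te^{(1)}$. The remaining ``moreover'' part --- that $\PB$ is Gibbsian and that $\Te^{(1)}$ is its canonical one-point transition energy field --- is precisely the converse statement already quoted in the present section: compatibility of a strictly positive random field with a quasilocal one-point transition energy field forces the canonical limits~\eqref{CanonicDelta} to exist for all $\wb x$ and to converge uniformly, necessarily coinciding with $\Te^{(1)}$ (see the work~\cite{DN2009} of the authors).

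For the uniqueness assertion, the key observation is that the displayed hypothesis is nothing but Dobrushin's classical uniqueness condition written out for $\Qb^{(1)}$. Indeed, for $\wb x, \wb y$ differing only at the point $s$, the inner expression $\frac{1}{2}\sum_{x \in X^t} |q_t^{\wb x}(x) - q_t^{\wb y}(x)|$ is exactly the total variation distance between the one-point Gibbsian distributions $q_t^{\wb x}$ and $q_t^{\wb y}$, and its supremum over such pairs is the Dobrushin interdependence coefficient $C_{ts}$. The assumption thus reads $\sup_t \sum_{s \neq t} C_{ts} < 1$, which is literally the Dobrushin condition. I would then apply Dobrushin's uniqueness theorem in its single-site form (see~\cite{D1} and the works~\cite{DN1998, DN2001, DN2004, DN2009} of the authors) to conclude that at most one random field is compatible with $\Qb^{(1)}$, hence with $\Te^{(1)}$; combined with existence, this yields the uniqueness of $\PB$.

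The routine verifications are the preservation of quasilocality along the chain $\Te^{(1)} \to \Qb^{(1)} \to \Qb$ and the coincidence of the various compatibility notions, both of which were already established in the preceding sections. The only genuinely substantive inputs are the two classical theorems of Dobrushin --- existence of a random field compatible with a quasilocal specification, and uniqueness under his condition --- so the main point of the argument is organizational rather than technical: I expect the one nontrivial step to be checking carefully that the displayed inequality, through the Gibbsian correspondence of Proposition~\ref{GibbsFormProb}, is indeed identical to Dobrushin's uniqueness condition for the associated one-point distributions, after which the statement becomes a direct translation of his theorem into the energetic language.
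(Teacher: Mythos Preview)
Your proposal is correct and matches the paper's approach exactly: the paper does not give an independent proof of this theorem but simply presents it as a reformulation, in the language of one-point transition energy fields, of Dobrushin's classical existence and uniqueness theorem (citing~\cite{D1} for the specification version and~\cite{DN1998, DN2001, DN2004, DN2009} for the 1--specification version). Your plan --- translate $\Te^{(1)}$ to $\Qb^{(1)}$ via Theorem~\ref{GibbsForm1-spec}, note that the displayed condition is literally Dobrushin's uniqueness condition for $\Qb^{(1)}$, and then invoke the classical results --- is precisely the intended argument made explicit.
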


Note that though the phrasing of the above theorem is based on the one-point transition energy field $\Te^{(1)}$, as a matter of fact, it uses the elements of the corresponding 1--specification $\Qb^{(1)}$ in the formulation of the Dobrushin uniqueness condition.  Below we give an alternative theorem formulated purely in terms of one-point transition energy fields, which however uses a sightly stronger uniqueness condition.

\begin{Thm}
Let $\Te^{(1)} = \bigl\{ \te_t^{\wb x},\ t \in \mathbb{Z}^d,\, \wb x \in X^{\mathbb{Z}^d \setminus t} \bigr\}$ be a one-point transition energy field.  If $\Te^{(1)}$ is quasilocal, there exists a random field~$\PB$ compatible with $\Te^{(1)}$.  Moreover, the random field $\PB$ is Gibbsian and~$\Te^{(1)}$ is its canonical one-point transition energy field.

If, in addition,
\[
\sup_{t \in \mathbb{Z}^d}\sum_{s \in \mathbb{Z}^d \setminus t} \sup\ \frac{1}{2} \max_{x,y \in X^t} \bigl| \te_t^{\wb x} (x,y) - \te_t^{\wb y} (x,y) \bigr| < 1,
\]
where the second $\sup$ is taken over all pairs of configurations $\wb x,\wb y \in X^{\mathbb{Z}^d \setminus t}$ differing only in the point $s$, the random field $\PB$ is the unique random field compatible with $\Te^{(1)}$.
\end{Thm}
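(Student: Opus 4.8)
The plan is to deduce the statement from the already established Theorem~\ref{ExUniqDobr}. Its first part --- the existence of a random field $\PB$ compatible with $\Te^{(1)}$, the fact that $\PB$ is Gibbsian, and that $\Te^{(1)}$ is its canonical one-point transition energy field --- coincides verbatim with the first part of Theorem~\ref{ExUniqDobr}, so nothing new is required there. The only genuinely new content is the uniqueness assertion under the assumed condition, and the whole strategy is to prove that this condition is \emph{stronger} than the Dobrushin uniqueness condition appearing in Theorem~\ref{ExUniqDobr}. Once this implication is in hand, the uniqueness of $\PB$ follows at once from that theorem.

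Concretely, the Dobrushin condition of Theorem~\ref{ExUniqDobr} is built from the quantities
\[
\frac{1}{2} \sum_{x \in X^t} \bigl| q_t^{\wb x}(x) - q_t^{\wb y}(x) \bigr|,
\]
where $\Qb^{(1)} = \bigl\{ q_t^{\wb x} \bigr\}$ is the strictly positive 1--specification associated with $\Te^{(1)}$ by Theorem~\ref{GibbsForm1-spec}, and $\wb x,\wb y$ differ only at a single point $s$. Since both conditions are obtained by applying the same outer operations $\sup_{t}\sum_{s}\sup_{\wb x,\wb y}$, it suffices, by monotonicity of these operations, to establish the \emph{pointwise} inequality
\[
\frac{1}{2} \sum_{x \in X^t} \bigl| q_t^{\wb x}(x) - q_t^{\wb y}(x) \bigr| \le \frac{1}{2} \max_{x,y \in X^t} \bigl| \te_t^{\wb x}(x,y) - \te_t^{\wb y}(x,y) \bigr|
\]
for each fixed $t$, $\wb x$ and $\wb y$. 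I would isolate this as a lemma concerning two arbitrary strictly positive probability distributions $P = q_t^{\wb x}$ and $Q = q_t^{\wb y}$ on the finite set $X^t$.

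To prove the lemma I would set $r(x) = P(x)/Q(x)$. By Proposition~\ref{GibbsFormProb} one has $\te_t^{\wb x}(x,y) = \ln\frac{P(x)}{P(y)}$ and $\te_t^{\wb y}(x,y) = \ln\frac{Q(x)}{Q(y)}$, so their difference equals $\ln\frac{r(x)}{r(y)}$ and therefore $M := \max_{x,y}\bigl|\te_t^{\wb x}(x,y) - \te_t^{\wb y}(x,y)\bigr| = \ln\bigl(\max_x r(x)/\min_x r(x)\bigr)$; in other words the values of $r$ lie within a multiplicative factor $e^{M}$. On the other hand $\sum_x Q(x)\,r(x) = \sum_x P(x) = 1$, so the $Q$-average of $r$ equals $1$ and hence $\min_x r(x) \le 1 \le \max_x r(x)$. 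The left-hand side of the desired inequality equals $\frac{1}{2}\sum_x Q(x)\,|r(x) - 1|$, and for a prescribed oscillation $M$ this is largest when $r$ is supported on the two extreme values $e^{-M/2}$ and $e^{M/2}$, which yields the sharp bound $\tanh(M/4)$. Since $\tanh(M/4) \le M/4 \le M/2$, the pointwise inequality follows.

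I expect the extremal computation in the lemma to be the only delicate point: one must check that, among all strictly positive $P,Q$ with a given value of $M$, the total variation distance is maximized by a two-point configuration. This reduces to an elementary one-variable optimization (the mean constraint $\sum_x Q(x)\,r(x) = 1$ together with the linearity of $r \mapsto |r - 1|$ on each side of $1$ forces the mass to the endpoints), and for the theorem one does not even need the sharp constant $\tanh(M/4)$ --- any bound of the form $\le M/2$ is enough. With the pointwise inequality secured, taking $\sup$ over pairs $\wb x,\wb y$ differing at $s$, summing over $s$, and taking $\sup$ over $t$ shows that the assumed condition being $< 1$ forces the Dobrushin coefficient sum to be $< 1$, and Theorem~\ref{ExUniqDobr} then delivers the uniqueness of the random field $\PB$ compatible with $\Te^{(1)}$.
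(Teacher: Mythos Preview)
Your approach is exactly the one taken in the paper: deduce the theorem from Theorem~\ref{ExUniqDobr} via the pointwise inequality
\[
\sum_{x \in X^t} \bigl| q_t^{\wb x}(x) - q_t^{\wb y}(x) \bigr| \le \max_{x,y \in X^t} \bigl| \te_t^{\wb x}(x,y) - \te_t^{\wb y}(x,y) \bigr|.
\]
The only difference is that the paper does not prove this inequality but cites it as ``essentially contained in the proof of Theorem~6.35 of Friedli and Velenik~\cite{FV2017}'', whereas you sketch a direct proof through the sharp bound $\tfrac{1}{2}\sum_x|P(x)-Q(x)|\le\tanh(M/4)\le M/2$. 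Your extremal argument (reducing to a two-point distribution and then optimizing over the location of the endpoints subject to $b/a=e^M$) is correct and yields the stated $\tanh(M/4)$; as you note, the cruder bound $M/2$ already suffices here.
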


This theorem is an immediate consequence of Theorem~\ref{ExUniqDobr} and of the inequality
\[
\sum_{x \in X^t} \bigl| q_t^{\wb x} (x) - q_t^{\wb y} (x) \bigr| \leq \max_{x,y \in X^t} \bigl| \te_t^{\wb x} (x,y) - \te_t^{\wb y} (x,y) \bigr|,
\]
which is essentially contained in the proof of Theorem 6.35 of Friedli and Velenik~\cite{FV2017} (see also Nahapetian and Khachatryan~\cite{NKh}).

As we have seen above, the mixtures and the limits of Gibbs random fields are not always Gibbsian.  However, this is no longer the case if we consider Gibbs random fields having the same canonical one-point transition energy field.  Moreover, the following theorem describing the structure of the set of random fields compatible with a given quasilocal one-point transition energy field holds (see the paper~\cite{D2} of Dobrushin for the formulation in terms of specifications and the work~\cite{DN2009} of the authors for the formulation in terms of 1--specifications).

\begin{Thm}
Let $\Te^{(1)}$ be a quasilocal one-point transition energy field.  The set $\cG \bigl(\Te^{(1)}\bigr)$ of random fields compatible with $\Te^{(1)}$ coincides with the set of Gibbs random fields having $\Te^{(1)}$ as canonical one-point transition energy field.  Furthermore, this set is convex and closed.
\end{Thm}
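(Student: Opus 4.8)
The plan is to prove the coincidence of the two descriptions first, and then to deduce convexity and closedness, the latter being most conveniently handled through the associated specification.

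\emph{Coincidence.} If $\PB$ is Gibbsian with canonical one-point transition energy field equal to $\Te^{(1)}$, then by definition the limits~\eqref{CanonicDelta} exist for every $\wb x$ and equal $\te_t^{\wb x}$, so $\PB$ is compatible with $\Te^{(1)}$. Conversely, let $\PB\in\cG\bigl(\Te^{(1)}\bigr)$. Since $\Te^{(1)}$ is quasilocal, $\PB$ is Gibbsian (as recalled above), so its canonical one-point transition energy field $\Tec^{(1)}$ is well defined for all $\wb x$ and is quasilocal; compatibility gives $\tec_t^{\wb x}=\te_t^{\wb x}$ for $\PB$-almost all $\wb x$. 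As $\PB$ is strictly positive, every cylinder set has positive measure, hence the full-measure set of such $\wb x$ is dense in the compact space $X^{\mathbb{Z}^d\setminus t}$; since $\wb x\mapsto\tec_t^{\wb x}(x,u)$ and $\wb x\mapsto\te_t^{\wb x}(x,u)$ are both quasilocal, i.e.\ continuous, and agree on a dense set, they agree everywhere. Thus $\Tec^{(1)}=\Te^{(1)}$, and the two sets coincide.

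\emph{Convexity.} Let $\PB_1,\PB_2\in\cG\bigl(\Te^{(1)}\bigr)$, let $\alpha\in[0,1]$, and denote by $\pb^1_V$ and $\pb^2_V$ the finite-dimensional distributions of $\PB_1$ and $\PB_2$. Those of $\PB=\alpha\PB_1+(1-\alpha)\PB_2$ are $\pb_V=\alpha\pb^1_V+(1-\alpha)\pb^2_V$, and $\PB$ is strictly positive. By the coincidence just established, for each $i\in\{1,2\}$ and every $\wb x$ one has $\pb^i_{t\cup\Lambda}(x\wb x_\Lambda)/\pb^i_{t\cup\Lambda}(u\wb x_\Lambda)\to\exp\{\te_t^{\wb x}(x,u)\}$ as $\Lambda\uparrow\mathbb{Z}^d\setminus t$. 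Since
\[
\frac{\pb_{t\cup\Lambda}(x\wb x_\Lambda)}{\pb_{t\cup\Lambda}(u\wb x_\Lambda)}
=\lambda_\Lambda\,\frac{\pb^1_{t\cup\Lambda}(x\wb x_\Lambda)}{\pb^1_{t\cup\Lambda}(u\wb x_\Lambda)}
+(1-\lambda_\Lambda)\,\frac{\pb^2_{t\cup\Lambda}(x\wb x_\Lambda)}{\pb^2_{t\cup\Lambda}(u\wb x_\Lambda)},
\qquad
\lambda_\Lambda=\frac{\alpha\,\pb^1_{t\cup\Lambda}(u\wb x_\Lambda)}{\pb_{t\cup\Lambda}(u\wb x_\Lambda)}\in[0,1],
\]
is a convex combination of two sequences tending to the same limit, it also tends to that limit. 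Hence the limits~\eqref{CanonicDelta} for $\PB$ exist for every $\wb x$ and equal $\te_t^{\wb x}$, so $\PB$ is compatible with $\Te^{(1)}$ and belongs to $\cG\bigl(\Te^{(1)}\bigr)$.

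\emph{Closedness.} Because $\Te^{(1)}$ is quasilocal, the diagram~\eqref{ComDiag} (Theorems~\ref{Delta1-Delta} and~\ref{GibbsFormSpec}) associates with it a unique strictly positive quasilocal specification $\Qb=\bigl\{q_V^{\wb x}\bigr\}$, and for strictly positive random fields compatibility with $\Te^{(1)}$ is equivalent (via compatibility with $\Qb^{(1)}$, see the remark following Theorem~\ref{DN}) to compatibility with $\Qb$, that is, to the equilibrium (DLR) equations
\[
\int\Bigl(\sum_{x\in X^V}q_V^{\,\omega_{\mathbb{Z}^d\setminus V}}(x)\,f(x\,\omega_{\mathbb{Z}^d\setminus V})\Bigr)\PB(d\omega)=\int f\,d\PB
\]
holding for all $V\in\W$ and all continuous $f$ on $X^{\mathbb{Z}^d}$. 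Now let $\PB_n\in\cG\bigl(\Te^{(1)}\bigr)$ with $\PB_n\to\PB$ weakly, and fix $V$ and a continuous $f$. The function $g(\omega)=\sum_{x\in X^V}q_V^{\,\omega_{\mathbb{Z}^d\setminus V}}(x)\,f(x\,\omega_{\mathbb{Z}^d\setminus V})$ is continuous, because $\Qb$ is quasilocal and the sum is finite; passing to the limit in $\int g\,d\PB_n=\int f\,d\PB_n$ and using weak convergence together with the continuity of $g$ and of $f$ yields $\int g\,d\PB=\int f\,d\PB$, so $\PB$ satisfies the DLR equations. Taking $f$ to be the indicator of the cylinder $\{\omega:\omega_V=x\}$ gives $\pb_V(x)=\int q_V^{\,\omega_{\mathbb{Z}^d\setminus V}}(x)\,\PB(d\omega)$; since $q_V^{\wb x}(x)$ is strictly positive, quasilocal, and hence bounded below by a positive constant on the compact set $X^{\mathbb{Z}^d\setminus V}$, we obtain $\pb_V(x)>0$. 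Thus $\PB$ is a strictly positive random field satisfying the DLR equations, i.e.\ $\PB\in\cG\bigl(\Te^{(1)}\bigr)$.

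I expect the closedness to be the main obstacle: a weak limit of strictly positive random fields is not a priori strictly positive, and this must be recovered from the uniform positivity of the quasilocal specification (via compactness of $X^{\mathbb{Z}^d\setminus V}$), while the very passage of the compatibility condition to the limit relies on the Feller (continuity) property supplied by quasilocality. Convexity and the coincidence of the two sets, by contrast, are soft consequences of the definitions once the continuity and strict-positivity remarks are in place.
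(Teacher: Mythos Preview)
The paper does not supply an in-text proof of this theorem; it is stated with a parenthetical reference to Dobrushin~\cite{D2} (for specifications) and to the authors' earlier work~\cite{DN2009} (for 1--specifications), so there is no detailed argument to compare against line by line. Your proof is correct and follows the standard route implicit in those references: translate, via the diagram~\eqref{ComDiag}, to the associated quasilocal specification $\Qb$ and use the Feller property of the DLR kernels for closedness, while the coincidence part is exactly the density-plus-continuity argument one expects (and which underlies the uniqueness of the quasilocal conditional distribution mentioned after~\eqref{DCCeq}).

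One point worth noting is that your convexity argument is somewhat more ``energetic'' than the usual one: the classical proof shows directly that the DLR equations are preserved under convex combinations (which is immediate, since they are linear in $\PB$), whereas you work with the ratios $\pb_{t\cup\Lambda}(x\wb x_\Lambda)/\pb_{t\cup\Lambda}(u\wb x_\Lambda)$ and exploit that, by the coincidence part, both summands converge to the \emph{same} limit for \emph{every} $\wb x$. This is a pleasant variant in the spirit of the paper, though the linear DLR argument is shorter. Either way, your handling of the two genuinely delicate issues --- recovering strict positivity of the weak limit from the uniform lower bound on $q_V^{\wb x}$ over the compact $X^{\mathbb{Z}^d\setminus V}$, and passing the DLR identity to the limit via continuity of $g$ --- is exactly what is needed and matches the standard treatment referenced by the paper.
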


In conclusion, let us note that we presented only some basic statements of the theory of Gibbs random fields.  However, many other results of the Gibbsian theory can also be stated in terms of one-point transition energy fields.  The so-obtained description of random fields establishes a straightforward relationship between the probabilistic notion of (Gibbs) random field and the physical notion of (transition) energy, and so opens the possibility to directly apply probabilistic methods to such mathematical problems of statistical physics as, for example, decay of correlations, limit theorems, cluster expansions, and so on.

As we have already noted, the construction of the theory of random fields begins with the establishment of general facts in terms of finite-dimensional distributions of a random field. Limit theorems are also included in the area of these questions.  Already Dobrushin gave in~\cite{D1} an estimate of the mixing coefficient of the random field in terms of its one-point conditional distributions. Some further results in this direction can be found in Dalalyan and Nahapetian~\cite{DalN2011}.  Having the mixing property, one can indicate the conditions for the validity of limit theorems without appealing to the notion of potential.

Note also that besides systematizing and simplifying the Gibbsian theory by giving it a more traditional form, the axiomatics we propose allows to expand the scope of its applications.  Here are some examples.

The two species formulation of the Widom-Rowlinson model is based on a potential and is defined in both the continuum (see Widom and Rowlinson~\cite{WR}) and lattice (see Lebowitz and Gallavotti~\cite{LG}) cases.  In contrary, the area-interaction formulation of the Widom-Rowlinson model was defined in the continuum case only, directly through the Hamiltonian.  Our axiomatics makes it possible to define the lattice analogue of the area-interaction model.  For example, the corresponding one-point transition energy field can be directly defined by
\[
\te_t^{\overline x}(0,1) = -\te_t^{\overline x}(1,0) = \alpha + \beta \mathop{\mathrm{Card}}\biggl(B(t,R) \Big\backslash \bigcup_{s \,:\, \overline  x_s=1}B(s,R)\biggr),\quad t\in\mathbb{Z}^d,\ \overline x\in X^{\mathbb{Z}^d\setminus t},
\]
where $X=\{0,1\}$, and $B(t,R)$ denotes a ball of radius $R\in[1,+\infty)$ centered in $t$.

The proposed axiomatics can be easily extended to the case $X=\mathbb{R}^1$, which allows, for example, to consider Gaussian random fields. For a Gaussian random field, the elements of the one-point transition energy field have the form
\[
\te_t^{\overline x} (x,u) = \frac{a_{tt}}{2} \bigl( (u-m_t)^2 - (x-m_t)^2 \bigr) + (u-x) \sum \limits_{r \in \mathbb{Z}^d \backslash \{t\}} a_{tr}(\overline x_r - m_r), \quad x,u \in \mathbb{R},
\]
where $A = \{a_{ts}, t,s \in \mathbb{Z}^d\}$ is the matrix inverse to the covariance matrix of the Gaussian field, and $m = \{m_t, t \in \mathbb{Z}^d\}$ is the vector of its mean values.  Using this expression, one can extend the known results of Dobrushin~\cite{D5} and Georgii~\cite{Georg} about Gibssian representation of Gaussian random fields to the case of infinite-range potentials (a paper on the subject is in preparation).

\section*{Acknowledgments}

The authors are grateful to Sergey Pirogov for useful discussions and comments.  The authors would also like to thank the reviewer for careful reading of the paper and for his useful remarks.  Serguei Dachian acknowledges support from the Labex CEMPI (ANR-11-LABX-0007-01).

\end{document}